\theoremstyle{plain}
\newtheorem{theorem}{Theorem}[section]
\newtheorem{corollary}[theorem]{Corollary}
\newtheorem{lemma}[theorem]{Lemma}
\newtheorem{proposition}[theorem]{Proposition}
\theoremstyle{definition}
\newtheorem{definition}[theorem]{Definition}
\theoremstyle{remark}
\newtheorem{remark}[theorem]{Remark}
\numberwithin{equation}{section}
\newcommand{\N}{\mathbb{N}}
\newcommand{\R}{\mathbb{R}}
\newcommand{\Y}{\mathbf{Y}}
\renewcommand{\S}{\mathbf{S}}
\newcommand{\NN}{\mathbf{N}}
\newcommand{\Z}{\mathbf{Z}}
\newcommand{\ind}[1]{\mathbf{1}_{\left\{#1\right\}}}
\newcommand{\crochet}[1]{{\langle #1 \rangle}}
\renewcommand{\bar}[1]{\overline{#1}}
\renewcommand{\tilde}[1]{\widetilde{#1}}
\newcommand{\e}{\mathrm{e}}
\newcommand{\dd}{\mathrm{d}}
\newcommand{\egaldistr}{{\overset{(d)}{=}}}
\DeclareMathOperator{\E}{\mathbb{E}}
\renewcommand{\P}{\mathbb{P}}
\newcommand{\calP}{\mathcal{P}}
\newcommand{\x}{\mathbf{x}}
\newcommand{\y}{\mathbf{y}}
\newcommand{\X}{\mathbf{X}}
\renewcommand{\rho}{\varrho}
\renewcommand{\epsilon}{\varepsilon}
\title{Branching-stable point measures
and processes}
\author{Jean Bertoin\footnote{Institute of Mathematics, University of Zurich, Switzerland} \and Aser Cortines\footnote{Institute of Mathematics, University of Zurich, Switzerland} \and Bastien Mallein\footnote{LAGA - Institut Galil\'ee, Universit\'e Paris 13, France}}
\date{\today}
\begin{document}

\maketitle

\begin{abstract}
We introduce and study the class of branching-stable point measures, which can be seen as an analog of stable random variables when the branching mechanism for point measures replaces the usual addition. In contrast with the classical theory of stable (L\'evy) processes, there exists a rich family of branching-stable point measures with \emph{negative} scaling exponent, which can be described as certain Crump-Mode-Jagers branching processes. We investigate the asymptotic behavior of their cumulative distribution functions, that is, the number of atoms in $(-\infty, x]$ as $x\to \infty$, and further depict the genealogical lineage of typical atoms. For both results, we rely crucially on the work of Biggins.
\end{abstract}

\noindent \emph{\textbf{Keywords:}} Branching random walk, Lévy processes, stable laws, point processes, self-similarity.
\medskip

\noindent \emph{\textbf{AMS subject classifications:}}  60G44, 60J80.

\section{Introduction}
\label{sec:introduction}
Recall that a random variable $\xi_1$ with values in $\R^d$ has a (strictly) stable distribution if and only if for every integer $n\geq 2$, the sum of $n$ i.i.d. copies of $\xi_1$ has the same law as $a(n)\xi_1$, where $a(n)$ is some sequence of positive numbers. Excluding implicitly the degenerate case when $\xi_1\equiv 0$, there exists then an index $\alpha\in(0,2]$ such that $a(n)=n^{1/\alpha}$, and the distribution of $\xi_1$ is called $\alpha$-stable. This family of probability laws arises naturally in a variety of weak limit theorems and plays therefore an important role in the analysis of many random processes (see, for instance, \cite{Bogetal, GeWe, Sato, SamTa} and references therein). In this article, we introduce and study a class of random point measures satisfying an analogous property, where the addition of random variables is replaced by branching mechanism for point processes. 
We expect that similarly, this family shall describe attractors for natural dynamics involving branching mechanisms.

To present more precisely our purpose and results, it is convenient to set up right now some notation that we will use throughout this work. 
We write $\calP$ for the space of non-decreasing sequence $\x = (x_k)_{k \geq 1}$ in $(-\infty,\infty]$ with $\lim_{k \to \infty} x_k = \infty$, and endow $\calP$ with the topology of pointwise convergence. This space is canonically identified with the space of locally finite point measures giving finite mass to $(-\infty,0]$ by
\[
  \x \simeq \sum_{k \geq 1} \delta_{x_k},
\]
with the convention $\delta_\infty = 0$. In other words, the $k$-th element $x_k$ of $\x$ is viewed as the location of 
the $k$-th left-most atom of a locally finite point measure, with the convention that $x_k=\infty$ if that point measure has total mass less than $k$. We shall view thus $\x \in \calP$ both as a non-decreasing sequence and as a point measure, often going from one interpretation to the other without explicit mention of it.

The following two basic operators on $\calP$ will play an important role in the sequel. First, the translation operator with displacement $y\in\R$ simply transforms $\x=(x_k)_{ k\geq 1}$ into $y+\x\coloneqq (y+x_k)_{k\geq 1}$. Second, the dilation operator with a factor $c>0$, transforms $\x$ into $c\x\coloneqq(cx_k)_{ k\geq 1}$.

We also equip $\calP$ with an internal composition law $\sqcup$ corresponding to superposition; specifically $\x \sqcup \y $ denotes the sequence in $\cal P$ obtained by arranging the elements of $\x$ and $\y$ (elements are repeated according to their multiplicities) in the non-decreasing order. When we think of $\x$ and $\y$ as locally finite point measures, $\x \sqcup \y $ then simply corresponds to the sum\footnote{However, in order to avoid a possible confusion with the translation operator, we prefer to use the notation $\x \sqcup \y $ rather than $\x+\y$. Beware also that $c\x$, viewed as a measure, is different from assigning mass $c$ to each atom of $\x$ and rather corresponds to dilating the location of each atom by a factor $c$.} of the two. 

Next, we denote the integral of a function $g:\R\to \R_+$ with respect to $\x$ by
$$\crochet {\x, g}\coloneqq \int_{(-\infty, \infty)} g(x) \x(\dd x)=\sum_{k\geq 1} g(x_k).$$
In the case when $g = \mathbf{1}_{A}$ is the indicator function of some subset $A\subseteq \R$, we simply write $\x(A)\coloneqq \crochet {\x, \mathbf{1}_{A}}$ for the number of atoms of $\x$ in $A$. 

In this framework, a \emph{branching random walk} $\X := (\X_n)_{n \geq 0}$ is a $\calP$-valued random process in discrete time, started from a single atom at $0$ (i.e. we have $\X_0= (0,\infty,\infty,\ldots)$), which fulfills the following branching property. There exists a sequence $(\X^{(k)})_{k\in\N}$ of i.i.d. copies of $\X$, which are further independent of $\X_1$, such that for all $n\geq 0$, we have
$$\X_{n+1}=\bigsqcup_{k\geq 1} (x_k+\X^{(k)}_n),\qquad \text{with }(x_k)_{k\geq 1}=\X_1.$$
We call the law of $\X_1$ (and sometime, by a slight abuse, $\X_1$ itself) the \emph{reproduction law} of the branching random walk $\X$. The lecture notes \cite{Shi15} provide a self-contained introduction to this topic.

We are interested here in the sub-family of reproduction laws satisfying an additional self-similarity property, which is analogous to scaling for random variables (and therefore rather use for it the notation $\S$).

\begin{definition}\label{def_branching_stable} Let $\S_1$ be a random point measure in $\calP$
such that 
\begin{equation}\label{e:nondegLap}
\E
\left(
\crochet{\S_1, \e_{-\vartheta}}
\right)<\infty \quad \text{for some $\vartheta\geq 0$,}
\end{equation}
where $ \e_{-\vartheta}(x)=\e^{-\vartheta x}$. We call $\S_1$ 
 \emph{branching-stable}
if it fulfills the following property: there exists a sequence of positive real numbers $(a(n))_{n\geq 1}$ such that for each $n\geq1$, there is the identity in distribution
\begin{equation}
  \label{eqn:defStability}
 \S_n \egaldistr  a(n) \S_1, 
\end{equation}
where $(\S_n)_{n\geq 0}$ denotes a branching random walk with reproduction law $\S_1$.
\end{definition}

\begin{remark}
This definition is different from that in \cite{ZaZ15}, where a random point measure $\Y$ is called branching-stable with exponent $\alpha >0$ if, in the present notation, for all $t\in(0,1)$, there is the identity in distribution
\[
 \left( t^{1/\alpha} \bullet \Y_1  \right) \sqcup \left( (1-t)^{1/\alpha} \bullet \Y_2  \right) \egaldistr \Y ,
\]
where $\Y_1$ and $\Y_2$ are independent copies of $\Y$, and for $s\in(0,1)$, $s \bullet \Y$ represents a point measure such that every atom in $\Y$ is replaced by the value at time $-\log s$ of an i.i.d. copy of a continuous-time Markov branching process. 
\end{remark}

Condition~\eqref{e:nondegLap} in Definition~\ref{def_branching_stable} is a non-degeneracy requirement for the Laplace transform of the intensity measure which is fairly standard for branching processes and lies at the heart of many useful tools in this area. In particular, it is essential in the article \cite{BeMa}, which shall also play an important role here.

Two degenerate examples of random point measures fulfilling \eqref{e:nondegLap} and \eqref{eqn:defStability} are $\S_1=(\infty, \ldots)=0$ a.s. and $\S_1 =(0,\infty,\infty,\ldots) = \delta_0$ a.s. (then the normalizing sequence $(a(n))_{ n \geq 1}$ can be chosen arbitrarily). We henceforth only consider non-degenerate
cases. There are further obvious examples with a trivial branching mechanism, that are associated to stable L\'evy processes. Specifically, consider for some $\alpha\in(0,2)$ a random walk $(S_n)_{n \geq 0}$ with step distribution given by an $\alpha$-stable random variable, and set $\S_n = \delta_{S_n}$. Then $(\S_n)_{n \geq 0}$ can be viewed as a branching random walk with a unique offspring for each atom at each generation. Obviously \eqref{e:nondegLap}  holds with $\vartheta=0$ and also
\eqref{eqn:defStability} with $a(n) = n^{1/\alpha}$. 

The starting point of our work is the observation that the family of bran-ching-stable point measures is actually quite large. To start with, we observe in Lemma~\ref{l:BM1} that the normalizing sequence in \eqref{eqn:defStability} has necessarily the form $a(n)=n^{\varepsilon/\alpha}$ with $\varepsilon = \pm1$ and $\alpha >0$. We call $\varepsilon \alpha$ the \emph{scaling exponent}. We shall prove in Proposition~\ref{prop:oneParticle} that the case of a positive scaling exponent is always trivial, in the sense that a branching-stable point measure for which $\varepsilon =+1$ is always given by a single atom whose location has an $\alpha$-stable distribution. In particular, the only non-degenerate cases with a positive scaling exponent occur for $0<\alpha\leq 2$.
The family of branching-stable point measures with negative scaling exponent turns out to be much more interesting, as we shall now explain. 

Recall first from \cite{BeMa}\footnote{Beware however that in \cite{BeMa}, we considered point measures whose atoms can be ranked in {non-increasing} order rather than in non-decreasing order as here, so the results taken from there are rephrased after an obvious reflexion $\x\mapsto -\x$.} that a random point measure $\X_1$ satisfying \eqref{e:nondegLap} is called \emph{infinitely ramified} if for every $n\geq 1$, it has the same distribution as the $n$-th generation of some branching random walk. In other words, for each $n \geq 1$, there exists a branching random walk $\X^{(n)}=(\X^{(n)}_k)_{k\geq 0}$ such that $\X_1\egaldistr \X^{(n)}_n$. Infinite ramification can thus be seen as a branching version of the notion of infinitely divisibility. In this direction recall {\it e.g.} from \cite{Sato}, that every infinitely divisible variable $\xi_1$ arises as the value at time $t=1$ of a Lévy process $(\xi_t)_{t\geq 0}$. A similar connection was established in \cite{BeMa} between infinitely ramified point measures and so-called \emph{branching Lévy processes}. Roughly speaking, a branching Lévy process is a càdlàg process in continuous time with values in $\calP$, whose atoms move (independently of one-another) according to a certain Lévy process, and give birth to children in a Poissonian fashion; we stress that the total branching rate may be infinite. We refer to Section~1 in \cite{BeMa} for a precise definition and a characterization of such processes, and to Section~5 of the same paper for a rigorous construction. It is proven in that article, that every infinitely ramified point measure $\X_1$ can be obtained as the value at time $t=1$ of a branching Lévy process $(\X_t)_{ t \geq 0}$.

Any branching-stable point measure is plainly infinitely ramified, and hence we essentially have to determine the class of branching L\'evy processes which fulfill the scaling property.
In Theorem~\ref{T1}, we prove that branching-stable point measures with negative scaling exponent are associated to branching L\'evy processes without spatial displacement (i.e. individuals are static and eternal) and for which the so-called L\'evy measure (which characterizes the statistics of reproduction events) is self-similar. More explicitly, the latter can be viewed as a special class of Crump-Mode-Jagers branching processes (see Crump and Mode \cite{CrumpMode}, Jagers \cite{Jagers}; we shall write simpy CMJ branching processes in the sequel) in which individuals beget progenies according to a time-homogeneous Poisson point process with a scale-invariant intensity measure, and such that all children are located at the right of their parents. 
We also provide an equivalent description in terms of a branching random walk $(\Z_n)_{n\geq 0}$ in $\R_+^2$,
where atoms record the birth times and locations of individuals in the CMJ process.

In the second part of the article, we present some properties of branching-stable point measures with negative scaling exponents. Relying on classical results due to Biggins \cite{Biggins77,Biggins92}, we investigate the asymptotic behavior of the cumulative distribution function $\S_1((-\infty,x])$ as $x \to \infty$. We establish in Theorem~\ref{T2} the convergence in distribution of this normalized quantity
\begin{equation}
 \label{eqn:t2forDummies}
 \lim_{x \to \infty} \frac{\S_1((-\infty,x])}{\E\left( \S_1((-\infty,x])\right)} = \bar {W} \quad \text{ in law,}
\end{equation}
where $\bar{W}$ is the limit of the so-called Biggins' additive martingale. It turns out that the mean $\E\left( \S_1((-\infty,x])\right)$ can be computed explicitly in terms of the so-called Wright generalized Bessel function, and in particular 
its asymptotic expansion as $x\to \infty$ is already known in the literature. 

Finally, we study the genealogy of the atoms of $\S_1$, from the viewpoint of CMJ processes. We shall describe the ancestral lineage of typical atoms, as well as the asymptotic position of the left-most atom at the $n$-th generation as $n\to \infty$. In this direction, information due to Biggins \cite{Big78} about the shape of the associated $2$-dimensional branching random walk $(\Z_n)_{n\geq 0}$ plays a crucial role.

\paragraph*{Organization of the paper.} The rest of the article is organized as follows. In Section~\ref{sec:structure}, we unveil the fine structure of branching-stable point processes, and in particular prove the main result of the article, namely, Theorem~\ref{T1}, which characterizes the law of branching-stable point measure with negative scaling exponent in terms of CMJ branching processes. In Section~\ref{sec:martingale}, we study in more details the cumulative distribution function of branching-stable point measures, obtaining the convergence in law for this process. Finally, in Section~\ref{sec:generation}, we turn our attention to the genealogy of atoms.

\section{The structure of branching-stable point measures}
\label{sec:structure}

The purpose of this section is to describe precisely the structure of branching-stable point measures. We implicitly exclude the degenerate case\footnote{In short, if either $\S_1$ is empty a.s., or reduced to a single atom at $0$ a.s., then \eqref{eqn:defStability} and \eqref{e:nondegLap} hold plainly. On the other hand, if all the atoms of $\S_1$ are located at $0$ a.s., but the number of atoms is neither equal to $0$ a.s. or $1$ a.s., then \eqref{eqn:defStability} must fail.} when all the atoms are located at $0$ a.s. This enables us to assert that the real number $a(n)>0$ in \eqref{eqn:defStability} is unique (for instance, by considering the distribution of the smallest non-zero atom). As a consequence, there is the identity $a(nk)=a(n)a(k)$ for all integers $n,k\geq 1$.

Our analysis crucially relies on \cite{BeMa}, in which it is shown that every infinitely ramified point measure can be viewed as  some branching Lévy processes evaluated at time $t=1$. A branching-stable point measure is \emph{a fortiori} infinitely ramified, and we specialize some results of \cite{BeMa} in the present setting.

\begin{lemma} \label{l:BM1} Let $\S_1$ be a random point measure that fulfills \eqref{e:nondegLap} and \eqref{eqn:defStability}. Then there exist: 

\noindent $\bullet$ a c\`adl\`ag process in continuous time $(\S_t)_{t\geq 0}$ with values in $\calP$, started from $\S_0=\delta_0$ and taking the value $\S_1$ at time $t=1$; 

\noindent $\bullet$ a real number $\varepsilon \alpha$ with $\alpha >0$ and $\varepsilon=\pm1$; 

\noindent such that the following assertions hold.
\begin{enumerate}
 \item [(i)] The process $\S=(\S_t)_{t\geq 0}$ fulfills the branching property: for every $r>0$, there exists a sequence $\S^{(k)}$ of i.i.d. copies of $\S$ such that 
 $$\S_{t+r}=\bigsqcup_{k\geq 1} (x_k+\S^{(k)}_t),\qquad \text{with }(x_k)_{k\geq 1}=\S_r,$$
 for every $t\geq 0$.
  \item [(ii)] The process $(\S_t)_{t\geq 0}$ is self-similar with scaling exponent $\varepsilon \alpha$, in the sense that for every $c >0$, there is the identity in distribution
$$\left( \S_{c^{\varepsilon \alpha}t} \right)_{t\geq 0} \egaldistr \left( c\S_t \right)_{t\geq 0}.$$ 
\end{enumerate}
\end{lemma}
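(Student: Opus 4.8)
The plan is to leverage the main result of \cite{BeMa}, which asserts that every infinitely ramified point measure is the time-one marginal of a branching Lévy process. Since a branching-stable $\S_1$ is infinitely ramified, I first want to embed $\S_1$ into a continuous-time branching Lévy process $(\S_t)_{t\ge 0}$ with $\S_0=\delta_0$. This immediately delivers the first bullet point and, by the very construction of branching Lévy processes, the branching property (i). The genuinely new content is extracting the \emph{exact} form of the normalizing sequence, namely $a(n)=n^{\varepsilon/\alpha}$, and promoting the discrete-time scaling identity \eqref{eqn:defStability} to the continuous-time self-similarity (ii).

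\medskip

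First I would analyze the multiplicativity $a(nk)=a(n)a(k)$ already noted in the text. Combined with monotonicity or measurability coming from the distribution of the smallest nonzero atom, this forces $a(n)=n^{\beta}$ for some real $\beta\neq 0$ (the case $\beta=0$ being excluded in the non-degenerate setting). Writing $\beta=\varepsilon/\alpha$ with $\varepsilon=\pm1$ and $\alpha>0$ then fixes the scaling exponent $\varepsilon\alpha$; this is essentially the content attributed to Lemma~\ref{l:BM1} in the introduction. To make the Cauchy-type functional-equation argument rigorous I would verify that $a$ is, say, monotone in $n$ by comparing the laws of the leftmost nonzero atoms of $\S_n$ and $\S_{n+1}$, so that the only multiplicative solutions are the power functions.

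\medskip

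Next I would upgrade the integer-indexed identity to the full continuous-time scaling. The key observation is that for the branching Lévy process $(\S_t)_{t\ge0}$, the discrete skeleton $(\S_n)_{n\ge0}$ is a branching random walk with reproduction law $\S_1$, so \eqref{eqn:defStability} gives $\S_n\egaldistr a(n)\S_1$. Applying the branching property (i) at rational, then real, times together with the semigroup structure, I would show that $(\S_{c^{\varepsilon\alpha}t})_{t\ge0}$ and $(c\S_t)_{t\ge0}$ are two branching Lévy processes whose one-dimensional marginals agree at $t=1$ (hence, by the functional equation for $a$ and the scaling of $c$, at all dyadic rationals). Because a branching Lévy process is determined in law by its semigroup — equivalently by the law of $\S_t$ for a dense set of $t$ — the two processes coincide in distribution as processes, which is precisely (ii). Here I would invoke the uniqueness/characterization results of \cite{BeMa} that a branching Lévy process is characterized by the common law of its marginals, to pass from finite-dimensional agreement to identity in law on path space.

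\medskip

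I expect the main obstacle to be the last step: transferring the scaling relation from the discrete times $n\in\N$ (where \eqref{eqn:defStability} lives) and the dilation factors $a(n)$ to \emph{all} real times $t>0$ and \emph{all} dilation factors $c>0$ simultaneously. The subtlety is that the dilation operator $c\mapsto c\x$ acts on atom locations while the time-change $t\mapsto c^{\varepsilon\alpha}t$ acts on the branching dynamics, and one must check these two operations are compatible at the level of the branching Lévy semigroup — not merely for one marginal. Controlling this requires using the branching property to reduce statements about $\S_{c^{\varepsilon\alpha}t}$ to products of independent shifted copies, and then matching intensity measures or Laplace functionals; the non-degeneracy condition \eqref{e:nondegLap} is what guarantees these Laplace functionals are finite and hence determine the laws. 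Once finite-dimensional distributions match on a dense set of times, càdlàg regularity and the uniqueness theory from \cite{BeMa} close the argument.
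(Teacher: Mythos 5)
Your overall strategy coincides with the paper's: invoke \cite{BeMa} to embed $\S_1$ as the time-one value of a c\`adl\`ag branching process, identify $a(n)$ as a power of $n$, and then upgrade the discrete scaling identity to self-similarity of the whole process. However, there are two genuine gaps in the execution. The first and most serious is your derivation of $a(n)=n^{\beta}$. The functional equation $a(nk)=a(n)a(k)$ on the integers alone does \emph{not} force a power law (one may prescribe a completely multiplicative function arbitrarily on the primes), and the two regularity properties you offer do not close the gap: measurability is vacuous for a function on $\N$, and your proposed monotonicity argument --- comparing the laws of the leftmost nonzero atoms of $\S_n$ and $\S_{n+1}$ --- is not substantiated, since there is no evident stochastic ordering between these laws (indeed, in the end $a$ turns out to be \emph{decreasing} in the most interesting case, and nothing in your sketch distinguishes the two possibilities a priori). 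The paper resolves this differently: using the Kolmogorov-extension construction of Proposition~2.1 in \cite{BeMa} together with \eqref{eqn:defStability}, one can build the process so that $a(2^n)\S_{2^{-n}}\egaldistr \S_1$, which extends $a$ multiplicatively to all dyadic rationals $k2^{-n}$; right-continuity of the paths then yields a right-continuous multiplicative function on $(0,\infty)$, and only at that point does the Cauchy functional equation give $a(t)=t^{\gamma}$. You also do not prove the exclusion of $\gamma=0$ (the paper gets it by letting $n\to\infty$ in $\S_{2^{-n}}\egaldistr\S_1$ and using $\S_0=\delta_0$); merely asserting it is ``excluded in the non-degenerate setting'' is not an argument.

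The second gap is your appeal to ``uniqueness/characterization results of \cite{BeMa} that a branching L\'evy process is characterized by the common law of its marginals.'' No such result is available: the paper explicitly remarks, right after Lemma~\ref{l:BM1}, that uniqueness of the branching L\'evy process associated with an infinitely ramified point measure is still pending in \cite{BeMa}. This step is nevertheless repairable without that theorem: once the one-dimensional marginals satisfy $\S_t\egaldistr t^{\gamma}\S_1$ for all $t>0$, the identity $\S_{c^{\varepsilon\alpha}t}\egaldistr c\S_t$ holds for each fixed $t$, the branching property (i) of the \emph{constructed} process then determines all finite-dimensional distributions from the one-dimensional ones, and c\`adl\`ag regularity upgrades the agreement of finite-dimensional distributions to identity in law on path space. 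You should make this chain explicit rather than citing a uniqueness statement that does not exist.
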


\begin{remark}
We stress that the law of the process $(\S_t, t \geq 0)$ in Lemma~\ref{l:BM1} is uniquely determined by the law of $\S_1$. Indeed, its one-dimensional marginals are characterized by (ii), and therefore, by the branching property (i), the finite dimensional marginals are uniquely determined as well. The fact that the process is c\`adl\`ag enables us to conclude. This contrasts with \cite{BeMa}, in which the question of uniqueness of the branching Lévy process associated to an infinitely ramified point measure is still pending.
\end{remark}

\begin{proof}
Theorem 1.1 in \cite{BeMa} ensures the existence of a c\`adl\`ag process $(\S_t)_{t\geq 0}$ that satisfies (i). More precisely, the argument relying on Kolmogorov's extension theorem in the proof of Proposition 2.1 of \cite{BeMa} combined with \eqref{eqn:defStability} shows actually that we can construct $(\S_t)_{t\geq 0}$ such that for every integer $n\geq 0$
\[a(2^n) \S_{2^{-n}} \egaldistr \S_1.\]
Then \eqref{eqn:defStability} yields for every integer $k\geq 1$
$$ \S_{k2^{-n}} \egaldistr \frac{a(k)}{a(2^n)}\S_1.$$
As a consequence we can define unambiguously $a(k2^{-n})=a(k)/a(2^{n})$ for dyadic rational numbers, and from the right-continuity of the process $(\S_t)_{t\geq 0}$, one readily deduces that $t\mapsto a(t)$ has a right-continuous extension to all real numbers $t>0$. Plainly, the function $a$ is multiplicative, thus there exists some exponent $\gamma\in\R$ such that $a(t) = t^{\gamma}$. 

We then note that the case $\gamma=0$ would yield a degenerate branching-stable point measure. Indeed, this is seen from letting $n\to \infty$ in the identity in distribution $\S_{2^{-n}}\egaldistr \S_1$ and using the facts that $(\S_t)_{t\geq 0}$ has c\`adl\`ag paths and that $\S_0$ reduces to a single atom at $0$ a.s.

Therefore $\gamma\neq 0$ and we conclude that (iii) holds with $1/\gamma$ replaced by $\varepsilon \alpha$, first in the sense of finite dimensional distributions, and then in the sense of processes since the sample paths are c\`adl\`ag a.s. The proof is complete.
\end{proof}

We call any process $(\X_t)_{t\geq 0}$ with c\`adl\`ag paths in $\calP$ such that the non-degeneracy condition for the Laplace transform \eqref{e:nondegLap} holds for $\X_1$, and which fulfills the branching property (i) and the scaling property (ii) of Lemma~\ref{l:BM1}, a \emph{branching-stable process} with scaling exponent $\varepsilon \alpha$. Then plainly $\X_1$ is a branching-stable point measure, and there is thus a one-to-one and onto correspondence between distributions of branching-stable point measures and distributions of branching-stable processes. 

Upon the simple reflexion $\S_t\mapsto -\S_t$, branching-stable processes can be viewed as a special class of \emph{branching L\'evy processes} as defined in \cite{BeMa}, and it has been shown there that the distribution of the latter is determined by a triplet, analogous to the characteristic triplet for L\'evy processes. It would thus be natural now to identify the subclass of characteristic triplets of branching L\'evy processes that correspond to branching-stable processes. However we shall rather follow a slightly different road for which the scaling property is easier to exploit, and which yields a simpler representation of branching-stable processes than that the general construction in \cite{BeMa}. 

The structure of branching-stable processes depends crucially on the sign $\varepsilon$ of the scaling exponent. More precisely, we show that if $\varepsilon = +1$, then the branching-stable process has a trivial branching mechanism (no death or reproduction event occurs). If $\varepsilon=-1$, then reproduction events occur, but the particles do not move. We first treat the case when $\varepsilon=+1$.

\begin{proposition}
\label{prop:oneParticle}
Assume that $\varepsilon=+1$.
\begin{enumerate}
\item[(i)]
If $\alpha >2$, then $\S_1$ is degenerate. 
\item[(ii)]
If $0<\alpha\leq 2$, then there exists an $\alpha$-stable L\'evy process $(\xi_t)_{t\geq 0}$
such that $\S_t$ is reduced to a single atom at $\xi_t$, viz. $\S_t=\delta_{\xi_t}$ for all $t\geq 0$, a.s.
\end{enumerate}
\end{proposition}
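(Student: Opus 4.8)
The plan is to exploit the self-similarity (scaling) property with $\varepsilon=+1$, which says $(\S_{c^{\alpha}t})_{t\geq 0}\egaldistr (c\S_t)_{t\geq 0}$, and show that it forces the branching mechanism to be trivial, so that $\S_t$ reduces to a single atom performing an $\alpha$-stable motion. The starting observation is that scaling with $\varepsilon=+1$ contracts space as time shrinks: sending $c\to 0$ (equivalently looking at very small times) makes the whole configuration collapse towards the location of the single ancestor at $0$. The first step I would take is to control the number of atoms. Let $N_t=\S_t(\R)\in\{0,1,2,\ldots,\infty\}$ denote the total number of atoms at time $t$; by the branching property (i), $(N_t)_{t\geq 0}$ is a (possibly explosive) continuous-time branching process started from $N_0=1$, and crucially $N_t$ is invariant under the spatial dilation $\x\mapsto c\x$. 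Hence the scaling property (ii) yields the identity in law $N_{c^{\alpha}t}\egaldistr N_t$ for every $c>0$ and $t\geq 0$, i.e. the law of $N_t$ does not depend on $t>0$. Combined with $N_0=1$ and the c\`adl\`ag (here, right-continuous at $0$) property, I would argue that $N_t=1$ for all $t$ a.s.: a genuine branching process whose one-dimensional marginals are all equal cannot reproduce, since any positive reproduction rate would make $\E(N_t)$ (or the law of $N_t$) genuinely time-dependent. This is where the non-degeneracy and the branching structure must be used carefully.

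Once the branching mechanism is shown to be trivial, i.e. $\S_t=\delta_{\xi_t}$ for a single moving atom $(\xi_t)_{t\geq 0}$, the branching property (i) degenerates into the ordinary additivity of $(\xi_t)$: the superposition formula with a single child at $x_1=\xi_r$ gives $\xi_{t+r}=\xi_r+\xi'_t$ with $\xi'$ an independent copy of $\xi$, so $(\xi_t)$ has stationary independent increments and is c\`adl\`ag, hence is a L\'evy process started from $\xi_0=0$. The scaling property (ii) then reads $(\xi_{c^{\alpha}t})_{t\geq 0}\egaldistr (c\,\xi_t)_{t\geq 0}$, which is precisely the defining self-similarity of a strictly $\alpha$-stable L\'evy process. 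This establishes part (ii) of the proposition for those $\alpha$ for which such a process exists.

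To finish, I would invoke the classical fact that a nonzero strictly stable L\'evy process on $\R$ exists only for index $\alpha\in(0,2]$. Thus if $\alpha>2$, no nondegenerate L\'evy process satisfies $(\xi_{c^{\alpha}t})\egaldistr(c\xi_t)$; the only possibility is $\xi_t\equiv 0$, which means $\S_t=\delta_0$ for all $t$, i.e. $\S_1$ is degenerate, proving part (i). Conversely for $0<\alpha\leq 2$ the $\alpha$-stable L\'evy process exists and furnishes the desired example, proving part (ii).

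The main obstacle I anticipate is the first step: rigorously ruling out reproduction. The scaling argument shows the law of $N_t$ is constant in $t>0$, but passing from ``$N_t$ has a $t$-independent law'' to ``$N_t\equiv 1$'' requires excluding, for example, a branching process that stays in equilibrium in distribution while still fluctuating. The cleanest route is probably to look at a functional that is both monotone along the genealogy and scale-invariant — for instance, use the branching property to write $N_{t+r}$ as a sum of $N_r$ i.i.d. copies of $N_t$, so that $\E(N_{t+r})=\E(N_r)\E(N_t)$ whenever these means are finite, forcing $\E(N_t)=\e^{\beta t}$; scale invariance of the law of $N_t$ then forces $\beta=0$, hence $\E(N_t)=1$ together with $N_t\geq 1$ a.s. (the ancestor never dies under scaling, as $N_t\egaldistr N_{c^\alpha t}\to N_{0+}=1$), which together give $N_t=1$ a.s. Making the ``$N_t\geq 1$ a.s.'' and finiteness-of-mean points airtight, possibly via the non-degeneracy condition \eqref{e:nondegLap} and right-continuity at $0$, is the delicate part.
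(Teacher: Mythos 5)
Your second and third steps match the paper's proof exactly: once $\S_t$ is known to consist of a single atom $\xi_t$, the branching property turns into independence and stationarity of the increments of $(\xi_t)_{t\geq 0}$, the scaling property makes this Lévy process strictly $\alpha$-stable, and the non-existence of such processes for $\alpha>2$ gives part (i). The genuine gap is in your first step. The total mass $N_t=\S_t(\R)$ is invariant under dilation, so the scaling property only tells you that its law is constant in $t>0$ --- and that is perfectly compatible with $N_t\equiv\infty$, which is exactly what happens for branching-stable processes with \emph{negative} exponent (Lemma~\ref{lem:lazyParent}). Neither of your two proposed ways out closes this. The mean argument needs $\E(N_t)<\infty$, but \eqref{e:nondegLap} only controls $\E\bigl(\sum_k \e^{-\vartheta x_k}\bigr)$ for \emph{some} $\vartheta\geq 0$, possibly strictly positive, which puts no bound at all on the number of atoms far to the right; a point measure in $\calP$ may have infinitely many atoms. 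And the continuity argument ``$N_t\egaldistr N_{c^{\alpha}t}\to N_{0+}=1$'' fails because $\x\mapsto\x(\R)$ is only lower semicontinuous for the topology of $\calP$ (atoms may escape to $+\infty$ along the limit), so càdlàg paths in $\calP$ do not make $t\mapsto N_t$ right-continuous at $0$; the negative-exponent case, where $N_0=1$ but $N_t=\infty$ for every $t>0$, shows this concretely.

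The paper's fix is short and uses precisely the non-scale-invariance of a compact-window count, which is the observable your reduction discards: by Lemma~\ref{l:BM1}(ii), $\S_{c^{\alpha}}([-1,1])\egaldistr(c\S_1)([-1,1])=\S_1([-1/c,1/c])$. As $c\to 0+$, the left-hand side tends to $\S_0([-1,1])=1$ (right-continuity at $0$ together with $\S_0=\delta_0$; the count in a fixed compact set behaves well here because the second atom tends to $+\infty$), while the right-hand side increases a.s. to $\S_1(\R)$. Comparing the two limits in distribution gives $\S_1(\R)=1$ a.s., with no moment assumption and with the value $\infty$ automatically excluded. If you want to keep the structure of your argument, replace $N_t$ by $\S_t([-1,1])$ and let the dilation turn the fixed window into an exhausting one.
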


\begin{proof}
By the self-similarity property of Lemma~\ref{l:BM1}(ii), we have for all $c>0$
$$\S_{c^{\alpha}}([-1,1]) \egaldistr (c\S_1)([-1,1]) = \S_1([-1/c,1/c]).$$
Letting $c\to 0+$ and recalling that $\S_0=\delta_0$, we see that $\S_1(\R)=1$ a.s., meaning that $\S_1$ is reduced to a single atom. Plainly, the same holds more generally for $\S_t$ for all $t\geq0$. If we write $\xi_t$ for the location of the unique atom of $\S_t$, then the branching property of the process $(\S_t)_{t\geq 0}$ translates into independence and stationarity of the increments of $(\xi_t)_{t\geq 0}$. The latter further inherits c\`adl\`ag paths and self-similarity from $(\S_t)_{t\geq 0}$, so $(\xi_t)_{t\geq 0}$ must be an $\alpha$-stable L\'evy process. Since the latter are degenerate when $\alpha>2$, this completes the proof. \end{proof} 
 
Our purposes in the rest of this section are, first to provide a construction of fairly natural examples of branching-stable processes with a negative scaling exponent $-\alpha$, and second, to show that actually any branching-stable process with scaling exponent $-\alpha$ can be obtained by this construction. In this direction, we start by introducing some notation.
 
We write $\calP^*_+$ for the space formed by the sequences $\x=(x_k)_{ k\geq 1}\in \calP$ with $x_k\in(0,\infty]$ for all $k\geq 1$ and $x_1<\infty$. The building block of our construction consists of a finite measure $\lambda\neq 0$ on $\calP^*_+$ such that
\begin{equation}
 \label{condlambda}
 c(\lambda)\coloneqq \int_{\calP^*_+} \sum_{k\geq 1} x_k^{-\alpha} \lambda(\dd \x)<\infty .
\end{equation}
We first define a sigma-finite measure $\Lambda^*$ on $\calP^*_+$, such that for every measurable functional $F: \calP^*_+\to \R_+$,
\begin{equation}
 \label{defLambda}
 \int_{\calP^*_+} F(\x) \Lambda^*(\dd \x)\coloneqq \int_0^{\infty} y^{\alpha-1} \int_{\calP^*_+} F(y\x) \lambda(\dd \x) \dd y .
\end{equation}

Next consider a Poisson point process $\NN$ on $(0,\infty)\times \calP^*_+$ with intensity $\dd t\times \Lambda^*(\dd \x)$. Viewing each atom $(t,\x)$ of $\NN$ as a sequence of atoms $((t,x_k))_{k\geq 1}$ on the fiber $\{t\}\times (0,\infty)$ (and ignoring atoms $(t,\infty)$ if any), $\NN$ induces a point process $\Z_1$ on $(0,\infty)^2$ defined by
$$\crochet{\Z_1,f}\coloneqq \int_{(0,\infty)\times \calP^*_+} \crochet{\x,f(t,\cdot)} \NN(\dd t, \dd \x),$$
where $f$ is a generic measurable nonnegative function on $(0,\infty)^2$ and 
$$\crochet{\x,f(t,\cdot)}= \sum_{k\geq 1} f(t,x_k).$$

Note from elementary Poissonian calculus that if we write $\mu_1$ for the intensity measure of $\Z_1$, then for every $t,a>0$, we have
\begin{eqnarray*} \mu_1((0,t]\times (0,a])&=&
\E(\Z_1((0,t]\times (0,a]))\\
&=& t \int_{\calP^*_+}\sum_{k\geq 1} \ind{x_k\leq a} \Lambda^*(\dd \x)\\
&=&t \int_{\calP^*_+}\sum_{k\geq 1} \int_0^{\infty} \ind{yx_k\leq a} y^{\alpha-1} \dd y \lambda(\dd \x)\\
&=&\alpha^{-1}t a^{\alpha}\int_{\calP^*_+}\sum_{k\geq 1} x_k^{-\alpha} \lambda(\dd \x)\\
&=&\alpha^{-1} c(\lambda) t a^{\alpha}.
\end{eqnarray*}
We thus get the identity
\begin{equation}\label{e:intens1}
\mu_1(\dd t, \dd x) = c(\lambda) x^{\alpha-1} \dd t \dd x, \qquad (t,x)\in(0,\infty)^2.
\end{equation}

We now view $\Z_1$ as the point process describing the reproduction of an individual in a 
CMJ branching process in $\R_+$. That is, an atom of $\Z_1$ at $(t,x)\in\R_+^2$ is interpreted as a birth event occurring when the age of the parent is $t$ and where the child is located at distance $x$ at the right of its parent. Note that the parent may beget several children at the same age, this corresponds to the situation where $(t,\x)$ is an atom of $\NN$ with $\x\in\calP_+^*$ having more than one atom.

The construction of this CMJ branching process can be realized as follows using a branching random walk $(\Z_n)_{n\geq 0}$ on $\R_+^2$ with reproduction law $\Z_1$. We consider the point measure $\bigsqcup_{n\geq 0} \Z_n$ that consists of all the atoms $(t,x)$ appearing in the branching random walk $(\Z_n)_{n\geq 0}$, possibly repeated according to their multiplicities; see Figure~\ref{fig:simul} for a simulation. We interpret this point measure on $\R_+^2$ as a population of static individuals living in $\R_+$, which grows as time passes. An atom $(t,x)$ of $\bigsqcup_{n\geq 0} \Z_n$ marks the birth of a new individual at time $t$ and position $x$. Once an individual is born, it remains at its birth position forever. For every $t\geq 0$, we write $\X_t$ for the point process on $\R_+$ that represents the locations of particles alive at time $t$, viz.
\begin{equation} \label{def_X}
\crochet{\X_t, g}= \sum_{n=0}^{\infty}\int_{[0,t]\times \R_+}
g(x)\Z_n(\dd s, \dd x),
\end{equation}
where $g:\R_+\to \R_+$ stands for a generic measurable function, and of course $\Z_0=\delta_{(0,0)}$ as usual.
\begin{figure}
\begin{center}
\includegraphics[scale=0.5, width=6cm]{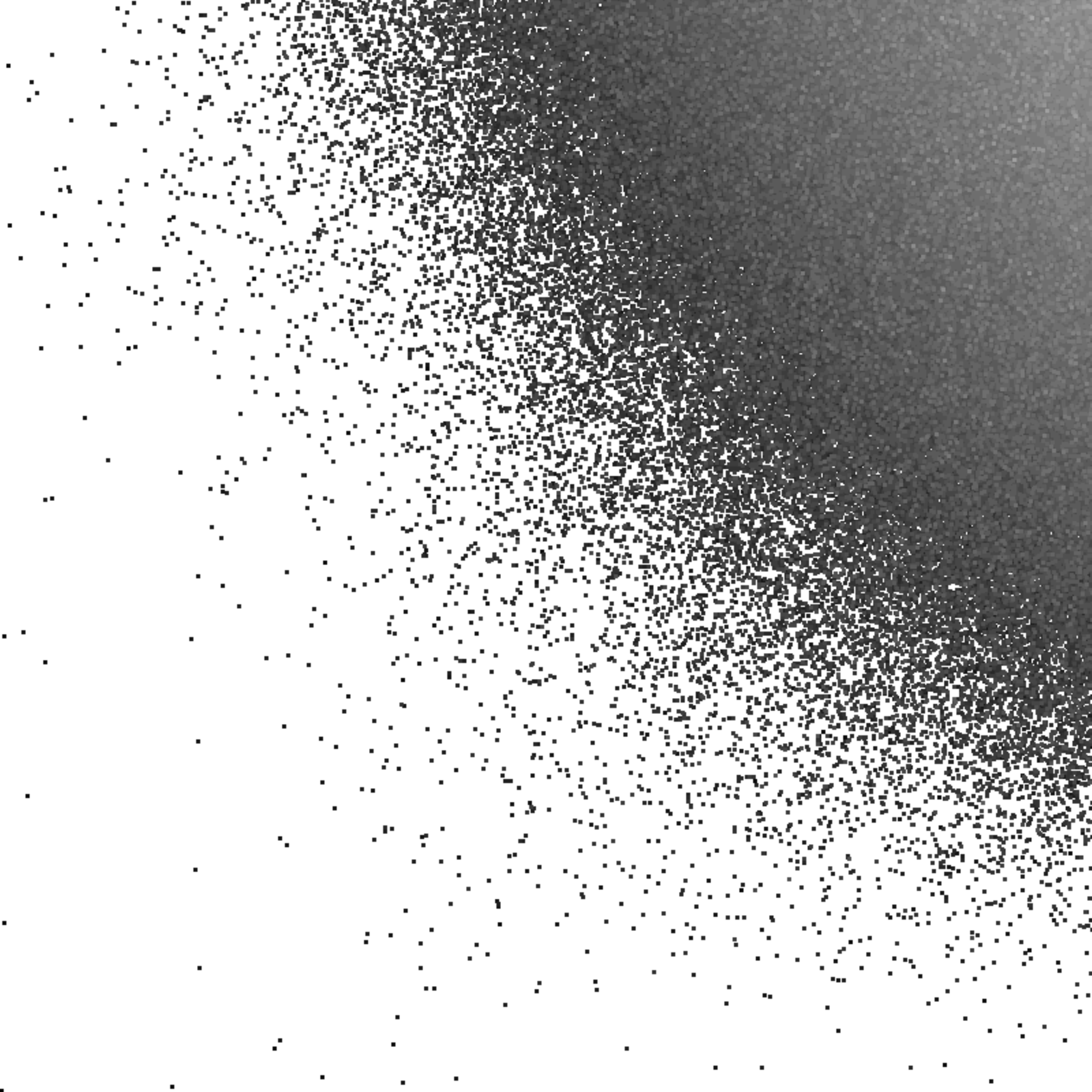}
\end{center}
\caption{A simulation of a random point measure $\bigsqcup_{n\geq 0} \Z_n $ on $\R_+\times \R_+$ with self-similarity exponent $-1$.}\label{fig:simul}
\end{figure}

\begin{theorem}\label{T1} \begin{enumerate}
\item[(i)] The process $(\X_t)_{t\geq 0}$ constructed above is branching-stable with scaling exponent $-\alpha$.
\item[(ii)] Conversely, for every branching-stable process with scaling exponent $-\alpha$, say $(\S_t)_{t\geq 0}$,
there exists a finite measure $\lambda$ on $\calP_+^*$ satisfying \eqref{condlambda}, such that
$(\X_t)_{t\geq 0}$ and $(\S_t)_{t\geq 0}$ have the same law.
\end{enumerate}
\end{theorem}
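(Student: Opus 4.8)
The plan is to treat the two implications separately. For (i) I would verify directly that the explicitly constructed process $(\X_t)_{t\geq 0}$ satisfies the three defining requirements of a branching-stable process: the non-degeneracy condition \eqref{e:nondegLap}, the branching property, and self-similarity with exponent $-\alpha$. For (ii) I would appeal to the representation of \cite{BeMa} to realise an arbitrary branching-stable process as a branching Lévy process, translate its self-similarity into a scaling constraint on the associated characteristic triplet, read off a finite measure $\lambda$, and conclude by the uniqueness recorded in the remark following Lemma~\ref{l:BM1}.

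For the branching property in (i), the key point is that the reproduction of a single individual is governed by a Poisson point process on $(0,\infty)\times\calP^*_+$ with intensity $\dd t\times\Lambda^*(\dd\x)$ that is homogeneous in the age variable $t$. By the restriction/memorylessness property of Poisson point processes, the reproduction of any individual after a fixed time $r$ (that is, at ages exceeding $r$ minus its birth time) is again such a Poisson point process, independent of the configuration up to time $r$; decomposing the population alive at time $t+r$ according to its most recent ancestor born by time $r$ then yields exactly the branching identity of Lemma~\ref{l:BM1}(i). For self-similarity, I would check that the map $(t,\x)\mapsto(c^{-\alpha}t,c\x)$ leaves the intensity $\dd t\times\Lambda^*(\dd\x)$ invariant: the defining identity \eqref{defLambda} gives the scaling $\Lambda^*\circ D_c^{-1}=c^{-\alpha}\Lambda^*$ under the dilation $D_c\colon\x\mapsto c\x$, which exactly compensates the factor $c^{\alpha}$ produced by rescaling the age variable. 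Since this transformation rescales the birth-time-and-position measure $\bigsqcup_{n}\Z_n$ by $(s,x)\mapsto(c^{-\alpha}s,cx)$ without changing its law, it follows that $c\X_t\egaldistr\X_{c^{-\alpha}t}$, the required scaling for $\varepsilon=-1$. Finally, a many-to-one computation over generations gives the closed form $\E(\crochet{\X_t,\e_{-\vartheta}})=\exp(t\,\Gamma(\alpha)\,c(\lambda)\,\vartheta^{-\alpha})$, finite for every $\vartheta>0$, so \eqref{e:nondegLap} holds.

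For the converse (ii), after the reflection $\S_t\mapsto-\S_t$ the process becomes a branching Lévy process in the sense of \cite{BeMa}, whose law is determined by a characteristic triplet consisting of a diffusion coefficient, a drift, and a reproduction Lévy measure on point configurations. The self-similarity of Lemma~\ref{l:BM1}(ii) forces this triplet to be invariant under the same space-time dilation $(t,\x)\mapsto(c^{-\alpha}t,c\x)$. I would first argue that the motion component must be trivial: a non-zero Gaussian coefficient, drift, or leftward jump part would survive as the displacement of a tagged (spine) particle, which would then be a non-trivial Lévy process enjoying the scaling $\xi_{c^{-\alpha}t}\egaldistr c\,\xi_t$, i.e.\ with a negative scaling index, incompatible with $\xi_0=0$; hence individuals are static and children lie strictly to their right. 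The reproduction Lévy measure $\Lambda$ is then a measure on $\calP^*_+$ satisfying $\Lambda\circ D_c^{-1}=c^{-\alpha}\Lambda$; disintegrating it in polar coordinates (writing each configuration as $y\bar\x$ with $\bar\x$ normalised, say by its leftmost atom) produces precisely a representation of the form \eqref{defLambda} for a unique finite measure $\lambda$ on $\calP^*_+$. The integrability \eqref{condlambda} then follows from \eqref{e:nondegLap}, since the Laplace computation of part (i) shows that finiteness of $\E(\crochet{\S_1,\e_{-\vartheta}})$ is equivalent to $c(\lambda)<\infty$. With this $\lambda$ in hand, part (i) exhibits $(\X_t)_{t\geq 0}$ as a branching-stable process with the same triplet, and the uniqueness of Lemma~\ref{l:BM1} identifies the two laws.

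I expect the main obstacle to lie in part (ii), specifically in extracting the characteristic triplet from \cite{BeMa} and proving rigorously that the scaling constraint annihilates the entire motion component (Gaussian part, drift, and any displacement of the parent or leftward children), thereby confining $\Lambda$ to $\calP^*_+$. Once the motion is shown to vanish, the disintegration of the self-similar measure $\Lambda$ into the polar form \eqref{defLambda} is routine, and the verification of \eqref{condlambda} together with the final identification follow immediately from part (i).
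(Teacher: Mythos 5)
Your part (i) coincides with the paper's argument: both verify self-similarity from the dilation identity $\Lambda^*\circ D_c^{-1}=c^{-\alpha}\Lambda^*$ and the mapping theorem for Poisson processes, obtain the branching property from the time-homogeneity of the reproduction Poisson process, and check \eqref{e:nondegLap} by a first-moment/Laplace computation. For part (ii), however, you have chosen precisely the route that the paper explicitly mentions and then declines to follow (``One could establish Theorem~\ref{T1}(ii) by showing that \dots the characteristics $(\sigma^2,a,\Lambda)$ \dots are necessarily given by $\sigma^2=a=0$ \dots This would require working with censoring techniques developed in \cite{BeMa}''). The paper instead proves directly from scaling that $\S_t(\{0\})=1$ and $\S_t((-\infty,0))=0$ a.s.\ (Lemma~\ref{lem:lazyParent}), deduces via the pathwise many-to-one formula that ancestral lineages are non-decreasing step functions (Lemma~\ref{l:BM2}), builds the planar point processes $\Y_n$ of birth-times and positions, shows $(\Y_n)_{n\geq 0}$ is a branching random walk by an optional-lines/dyadic-skeleton argument (Lemma~\ref{lem:brwplan}), identifies the reproduction measure as a self-similar Poisson intensity (Lemma~\ref{l:scalbrw}), and finally performs the polar disintegration (Lemma~\ref{lem_triplet}). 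Your closing disintegration step is exactly Lemma~\ref{lem_triplet}, so that part is fine; what your route buys is brevity at the blackboard, what it costs is the machinery needed to manipulate the triplet rigorously and the loss of the CMJ picture that the rest of the paper exploits.

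There is one genuine flaw in your sketch of the step that annihilates the motion. You assert that the tagged spine particle would be a Lévy process satisfying $\xi_{c^{-\alpha}t}\egaldistr c\,\xi_t$. But the spine of a branching Lévy process is defined through an exponential tilting at a parameter $\theta$, and the scaling of $(\S_t)_{t\geq 0}$ maps the $\theta$-spine to the $c\theta$-spine: one only gets $\xi^{(\theta)}_{c^{-\alpha}t}\egaldistr c\,\xi^{(c\theta)}_t$, which relates \emph{two different} Lévy processes and does not by itself force degeneracy. (One can still extract $\sigma^2=0$ and $a=0$ from this relation by comparing, say, Gaussian components, which are tilt-invariant, but ruling out all leftward jump activity of the spine this way requires genuinely more work.) The paper's Lemma~\ref{lem:lazyParent} sidesteps this entirely: letting $a\to\infty$ in $\S_{a^{-\alpha}}([-1,1])\egaldistr\S_1([-1/a,1/a])$ shows there is always exactly one atom at $0$ and none to its left, which already pins the ancestor at the origin forever; the many-to-one formula is then invoked only to conclude that the spine, being nonnegative with an atom at $0$, is a compound Poisson subordinator. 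I would recommend replacing your spine-scaling claim with this direct argument on the point measure before attempting the triplet identification.
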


We stress that different finite measures $\lambda$ on $\calP^*_+$ may yield via \eqref{defLambda} the same measure $\Lambda^*$ and hence the same branching-stable process $(\X_t)_{t\geq 0}$ (in law); however, if we further impose that $\lambda(\{\x\in \calP_+^*: x_1\neq 1\})=0$, then 
uniqueness of $\lambda$ follows; see the forthcoming Lemma~\ref{lem_triplet}.

The proof of the first part of Theorem~\ref{T1} is rather easy:
\begin{proof}[Proof of Theorem~\ref{T1}(i)] The key point in the definition \eqref{defLambda} of $\Lambda^*$ is that for every $c>0$ and measurable functional $F: \calP^*_+\to \R_+$, one has
\begin{eqnarray*}
\int_{\calP^*_+} F(c\x) \Lambda^*(\dd \x)&=& \int_0^{\infty} y^{\alpha-1} \int_{\calP^*_+} F(cy\x) \lambda(\dd \x) \dd y \\
&=& c^{-\alpha} \int_0^{\infty} r^{\alpha-1} \int_{\calP^*_+} F(r\x) \lambda(\dd \x) \dd r \\
&=& c^{-\alpha} \int_{\calP^*_+} F(\x) \Lambda^*(\dd \x).
\end{eqnarray*}
In words, the image of $\Lambda^*$ by the operator which dilates locations of atoms by a factor $c$ is $c^{-\alpha}\Lambda^*$. 
By the mapping theorem for Poisson point processes, the image of $\NN$ on $\R_+\times \calP^*_+$ by the map $(t,x)\mapsto (c^{-\alpha}t, c\x)$ is also Poisson with intensity $\dd t \times \Lambda^*(\dd x)$, and thus has the same distribution as $\NN$. 

As a consequence, the distribution of the point process $\Z_1$ on $\R_+\times \R_+$ is invariant for the map $(t,x)\mapsto (c^{-\alpha} t, cx)$, and then the same also holds for the branching random walk $(\Z_n)_{n\geq 0}$. A fortiori, the law of the point process $\sum_{n\geq 0} \Z_n$ is invariant for the map $(t,x)\mapsto (c^{-\alpha} t, cx)$, and this entails the self-similarity of the process $(\X_t)_{t\geq 0}$. 

The branching property of $(\X_t)_{t\geq 0}$ is readily seen from the interpretation of $(\X_t)_{t\geq 0}$ as a CMJ branching process. Recall that individuals are static and beget children around their position according to a Poisson point process with intensity $\dd t \times \Lambda^*(\dd \x)$. In this setting, the translation invariance of the Lebesgue measure $\dd t$ entails that the reproduction process of individuals is also invariant by time shift (meaning that the reproduction is age-independent), and the branching property of $(\X_t)_{t\geq 0}$ follows from the restriction theorem of Poisson point measures. 
 
It remains to verify that \eqref{e:nondegLap} holds for $\S_1=\X_1$. In this direction, note from \eqref{e:intens1} that for every $r>0$, there is the identity
$$\E\left(\int_{(0,\infty)^2} \e^{-rs- x} \Z_1(\dd s, \dd x) \right) = r^{-1}c(\lambda) \Gamma(\alpha).$$
We choose $r>c(\lambda) \Gamma(\alpha)$, so that the geometric series $\sum_{n\geq 0}( r^{-1}c(\lambda) \Gamma(\alpha))^n$ converges. This entails that 
$$\int_{(0,\infty)} \e^{-rt} \E\left(\crochet{\X_t,\e_{-1}} \right) \dd t
= r \sum_{n\geq 0} \E\left(\int_{(0,\infty)^2} \e^{-rs- x} \Z_1(\dd s, \dd x) \right) <\infty,$$
and thus \eqref{e:nondegLap} holds for $\vartheta =1$ (and then more generally for any $\vartheta >0$, by scaling). 
\end{proof}

The reflected branching-stable process $(-\X_t)_{t\geq 0}$ is a branching L\'evy process in the terminology of \cite{BeMa} (recall that reflexion is needed to fit the framework there). More precisely, comparing the construction here and that in \cite{BeMa} actually identifies its characteristics $(\sigma^2,a,\Lambda)$: the Gaussian coefficient is $\sigma^2=0$, the drift coefficient is $a=0$, and the so-called L\'evy measure $\Lambda$ is simply the image of $\Lambda^*$ by the map $\x\mapsto (0, -x_1, -x_2, \ldots)$ (recall that parents survive at every birth event and remain at the same location, which explains that the first coordinate is always $0$).

In particular, for every $\theta>0$, we have
\begin{eqnarray*}
\int_{\calP^*_+} \sum_{i=2}^{\infty} \e^{\theta y_i} \Lambda(\dd \y)
&=& \int_{\calP^*_+} \sum_{k=1}^{\infty} \e^{-\theta x_k} \Lambda^*(\dd \x)\\
&=& \int_0^{\infty} y^{\alpha-1} \int_{\calP^*_+} \sum_{k=1}^{\infty} \e^{-y\theta x_k} \lambda(\dd \x)\\
&=&\theta^{-\alpha}\Gamma(\alpha) \int_{\calP^*_+} \sum_{k=1}^{\infty} x_k^{-\alpha} \lambda(\dd \x)\\
&=& c(\lambda) \Gamma(\alpha) \theta^{-\alpha}.
\end{eqnarray*}
As a consequence, we see that the requirement (1.4) as well as the more demanding one of Corollary 6.8 in \cite{BeMa} are satisfied. 

One could establish Theorem~\ref{T1}(ii) by showing that conversely, the characteristics $(\sigma^2,a,\Lambda)$ of any branching L\'evy process that is further self-similar with a negative scaling exponent are necessarily given by $\sigma^2=a=0$ and a L\'evy measure $\Lambda$ as above. This would require working with censoring techniques developed in \cite{BeMa}, and would make the interpretation of branching-stable processes as CMJ branching processes less clear, so we shall rather follow a slightly different approach.

The first step of the proof of Theorem~\ref{T1}(ii) consists in the following observation. 
\begin{lemma}
\label{lem:lazyParent}
Let $(\S_t)_{t\geq 0}$ be a branching-stable process with negative scaling exponent $-\alpha$. For all $t > 0$, we have
\[
 \S_t(\{0\}) = 1 \quad, \quad \S_t((-\infty,0)) = 0 \quad \text{and} \quad \S_t((0,\infty)) = \infty\quad \text{a.s.} 
\]
\end{lemma}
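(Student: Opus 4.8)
The plan is to deduce all three identities purely from the self-similarity and branching properties of Lemma~\ref{l:BM1} together with the non-degeneracy assumption, without appealing to the CMJ construction (which would be circular, since this lemma feeds into the proof of Theorem~\ref{T1}(ii)). Throughout, fix $t>0$ and write $x_1(\cdot)\le x_2(\cdot)\le\cdots$ for the ordered atoms of a point measure in $\calP$, so that $x_1(\S_t)$ is the leftmost atom and $x_2(\S_t)$ the second one, with the convention that a coordinate equals $+\infty$ when the corresponding atom is absent. The key structural input is that, with $\varepsilon=-1$, the scaling relation of Lemma~\ref{l:BM1}(ii) reads $\S_{c^{-\alpha}t}\egaldistr c\,\S_t$ for every $c>0$; since dilation fixes the atom at $0$ and leaves the total number of atoms unchanged, several functionals of $\S_t$ will turn out to have a law that does not depend on $t$.

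First I would treat the statements $\S_t(\{0\})\ge 1$ and $\S_t((-\infty,0))=0$ by analysing the leftmost atom. The coordinate map $\x\mapsto x_1(\x)$ is continuous for the pointwise topology on $\calP$, so the a.s.\ c\`adl\`ag property gives $x_1(\S_s)\to x_1(\delta_0)=0$ as $s\downarrow 0$. Applying this along $s=c^{-\alpha}t\downarrow 0$ as $c\to\infty$ and using $x_1(\S_{c^{-\alpha}t})\egaldistr c\,x_1(\S_t)$, I would conclude that $c\,x_1(\S_t)\to 0$ in law; were $\P(x_1(\S_t)\neq 0)>0$, the rescaled quantity would escape to $\pm\infty$ on that event, a contradiction. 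Hence $x_1(\S_t)=0$ a.s., which is exactly $\S_t(\{0\})\ge 1$ and $\S_t((-\infty,0))=0$. The same argument applied to the second coordinate, now using $x_2(\delta_0)=+\infty$, shows $c\,x_2(\S_t)\to+\infty$ in law; since $x_2(\S_t)\ge 0$, this forces $\P(x_2(\S_t)=0)=0$, i.e.\ the second atom is strictly positive, which upgrades the first claim to $\S_t(\{0\})=1$ a.s.

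For the remaining identity $\S_t((0,\infty))=\infty$, equivalently $N_t:=\S_t(\R)=\infty$ (using $\S_t(\{0\})=1$), I would exploit scale invariance and branching. Dilation does not change the number of atoms, so the scaling relation gives $N_{c^{-\alpha}t}\egaldistr N_t$ for all $c$; choosing $c$ with $c^{-\alpha}=2$ yields $N_{2t}\egaldistr N_t$. On the other hand, the branching property with $r=t$ gives $\S_{2t}=\bigsqcup_{k\ge 1}\big(x_k(\S_t)+\S^{(k)}_t\big)$, and since every copy $\S^{(k)}_t$ has its leftmost atom at $0$ (by the previous step, each subtree keeps all its mass to the right of its root $x_k(\S_t)\ge 0$), counting atoms gives $N_{2t}=\sum_{k=1}^{N_t}N^{(k)}_t$ with the $N^{(k)}_t$ i.i.d.\ copies of $N_t$, independent of $N_t$. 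Writing $N$ for the common law of $N_t$, this yields the distributional fixed-point equation $N\egaldistr\sum_{k=1}^{N}N^{(k)}$ with $N\ge 1$ a.s.

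I would then extract $N=\infty$ from this equation in two steps. Since each summand is $\ge 1$, the event $\{\sum_{k=1}^N N^{(k)}=1\}$ equals $\{N=1,\,N^{(1)}=1\}$, so $\P(N=1)=\P(N=1)^2$ and thus $\P(N=1)\in\{0,1\}$; the value $1$ would force $\S_t=\delta_0$ a.s.\ for all $t$, which is the excluded degenerate case, so $\P(N=1)=0$ and $N\ge 2$ a.s. Finally, if $N$ were finite with positive probability, let $n_0\ge 2$ be its essential infimum; then $\sum_{k=1}^N N^{(k)}\ge n_0^2$ always and equals $n_0^2$ with positive probability, so the essential infimum of the right-hand side is $n_0^2$, whereas that of $N$ is $n_0$, forcing the impossible $n_0=n_0^2$. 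Hence $N=\infty$ a.s., completing the proof. The main obstacle is this last claim: the first two are immediate once the behaviour of the atoms near $t=0$ is pinned down, whereas ruling out a finite (but $\ge 2$) number of atoms requires the fixed-point equation together with the non-degeneracy hypothesis, and care is needed to handle the value $+\infty$ in the counting argument.
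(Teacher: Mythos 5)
Your proof is correct, and for two of the three assertions it takes a genuinely different route from the paper's. For the atom at the origin the two arguments are the same in spirit (zoom in by scaling and use right-continuity at $t=0$): the paper compares $\S_{a^{-\alpha}}([-1,1]) \egaldistr \S_1([-1/a,1/a])$ and lets $a\to\infty$, while you track the coordinates $x_1$ and $x_2$; that is a cosmetic difference. The divergences are elsewhere. First, the paper proves $\S_t((-\infty,0))=0$ by contradiction via a Galton--Watson domination: if a negative atom occurs with probability $p>0$, then along the dyadic skeleton of mesh $2^{-n}$ each atom in $(-\infty,0]$ begets at least one atom in $(-\infty,0]$ (the image of the atom at $0$ of its copy) and at least two with probability $p$, so $\S_1((-\infty,0])$ dominates the $2^n$-th generation of a supercritical GW process with no deaths, which explodes as $n\to\infty$ and contradicts $\S_1\in\calP$; you instead get this assertion for free from the leftmost-atom argument, a genuine simplification. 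Second, for $\S_t((0,\infty))=\infty$ the paper reuses the same GW domination (now every atom begets at least one child, and at least two with positive probability since the degenerate case is excluded), whereas you combine scale invariance of the atom count, $N_{2t}\egaldistr N_t$, with a single application of the branching property to obtain the fixed-point equation $N\egaldistr\sum_{k=1}^{N}N^{(k)}$, and then rule out finite values by the dichotomy $\P(N=1)\in\{0,1\}$ and the essential-infimum identity $n_0=n_0^2$. Both are valid; your route is more elementary and self-contained (no limit over dyadic skeletons), while the paper's GW argument is more quantitative---it shows the atom count grows at least geometrically along dyadic times, an estimate of independent use. Two small caveats, which apply equally to the paper's own proof: the branching property of Lemma~\ref{l:BM1}(i) must be read as including independence of the copies $\S^{(k)}$ from $\S_r$ (as in the discrete-time definition), which you invoke when claiming the $N^{(k)}_t$ are independent of $N_t$; and the hypothesis you actually use in the last step is the exclusion of the degenerate case $\S_1=\delta_0$, not the Laplace condition \eqref{e:nondegLap}.
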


\begin{proof} Just as in the proof of Proposition~\ref{prop:oneParticle}, we have for all $a>0$
$$\S_{a^{-\alpha}}([-1,1]) \egaldistr \S_1([-1/a,1/a]).$$
Letting now $a\to \infty$, we see that $\S_1(\{0\})=1$ a.s., meaning that $\S_1$ has always a unique atom at $0$, and by self-similarity, this extends to $\S_t$ for all $t\geq 0$. 

The second assertion of the statement is proved by contradiction. Suppose that $\P(\S_t((-\infty,0))\geq 1)=p>0$ for some $t>0$. Then by self-similarity, the same holds for all $t>0$, and thus $\P(\S_t((-\infty,0]))\geq 2)=p>0$. We take $t=2^{-n}$ and deduce from Lemma~\ref{l:BM1}(ii) that $\S_1((-\infty,0])$ is stochastically bounded from below by the size of the $2^n$-th generation of a Galton-Watson process with reproduction law $\nu$ with $\nu(0)=0$ and $\nu(1)\leq 1-p$. It follows that $\S_1((-\infty,0])=\infty$ a.s., which contradicts the fact that $\S_1\in\calP$ a.s.

Recall that the degenerate case $\S_1=\delta_0$ a.s. has been excluded, and therefore 
$\P(\S_1((0,\infty))>0)>0$. Then essentially the same argument as above shows that we must have $\S_t((0,\infty))=\infty$ a.s. for all $t>0$.
\end{proof}

Lemma~\ref {lem:lazyParent} shows that $\S_t$ always consists of a single atom at $0$ and a non-decreasing sequence of atoms in $(0,\infty)$. It incites us to view that atom at $0$ as the ancestor of a population and those in $(0,\infty)$ as an element of $\calP_+^*$ describing the descent of that ancestor at time $t$. That is, using the branching property, we view $(\S_t)_{t\geq 0}$ as a population model in $[0,\infty)$, such that individuals are eternal and static, and beget progeny located at their right as time passes. In this direction, we recall the natural genealogical structure of the branching-stable process as described in Section 6.1 in \cite{BeMa}. Roughly speaking, for every $t\geq 0$ and every $j\geq 1$, one can define a c\`adl\`ag path $x_{j,t}: [0,t] \to [0,\infty)$, where $x_{j,t}(s)$ shall be viewed as the ancestor at time $s$ of the $j$-th atom of $\S_t$ (in particular the terminal value $x_{j,t}(t)$ is the $j$-th atom of $\S_t$). In other words, $x_{j,t}$ describes the ancestral lineage of the $j$-th atom of $\S_t$.

\begin{lemma}
\label{l:BM2}
For every $t\geq 0$ and every $j\geq 1$, the ancestral lineage $x_{j,t}$ is a non-decreasing step function a.s. 
\end{lemma}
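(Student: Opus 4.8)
The plan is to verify, for a fixed $t>0$ and $j\geq 1$, three features of the càdlàg ancestral path $x_{j,t}\colon[0,t]\to[0,\infty)$ in turn: that it is non-decreasing, that the individuals it follows never move (so it has no continuous part), and that it jumps only finitely often. Note first that $x_{j,t}$ is well defined with a finite terminal value a.s., since $\S_t((0,\infty))=\infty$ a.s. by Lemma~\ref{lem:lazyParent}; write $M:=x_{j,t}(t)<\infty$ for the location of the $j$-th atom. For monotonicity I would argue directly from the branching property. For $0\leq s\leq s'\leq t$, the ancestor $x_{j,t}(s')$ is a descendant, after elapsed time $s'-s$, of the ancestor located at $y:=x_{j,t}(s)$. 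By Lemma~\ref{l:BM1}(i), the descent of an atom at $y$ after time $s'-s$ is distributed as $y+\S'_{s'-s}$ for an independent copy $\S'$ of $\S$, and every atom of $\S'_{s'-s}$ lies in $[0,\infty)$ by Lemma~\ref{lem:lazyParent}; hence $x_{j,t}(s')\geq y=x_{j,t}(s)$, so $x_{j,t}$ is non-decreasing and starts from $x_{j,t}(0)=0$.

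Second, I would show that every individual along the lineage is static. Consider the left-most atom of $\S_t$, which sits at $0$ by Lemma~\ref{lem:lazyParent}; its lineage is non-decreasing and equals $0$ at both endpoints $s=0$ and $s=t$, hence is identically $0$. Since children are located strictly to the right of their parents (again Lemma~\ref{lem:lazyParent}), this constant lineage can never adopt a newborn child, so it follows a single individual—necessarily the root—which therefore remains at $0$ throughout $[0,t]$. The branching property then upgrades this to all individuals: the descent of any individual born at time $\tau$ at position $p$ is a translate $p+\S'$ of an independent copy of $\S$, so the same reasoning applied to $\S'$ shows that its left-most descendant stays at $p$, i.e.\ that individual does not move on $[\tau,t]$. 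Consequently $x_{j,t}$ is constant between the reproduction events at which it switches to a child, and all of its variation comes from upward jumps.

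Finally I would bound the number of jumps via local finiteness. Each jump of $x_{j,t}$ occurs at a reproduction event where the lineage adopts a child, and that child is an atom of $\S_t$ lying in $(0,M]$, because the lineage is non-decreasing and terminates at $M$. Distinct jumps correspond to distinct such atoms, and $\S_t((0,M])=\S_t([0,M])-1<\infty$ a.s.\ since $\S_t\in\calP$ is locally finite. Thus $x_{j,t}$ has finitely many jumps and is constant between them, so it is a non-decreasing step function a.s., as claimed.

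The main obstacle is the second step: ruling out continuous motion of the individuals. The monotonicity and the finiteness of jumps are essentially bookkeeping consequences of Lemma~\ref{lem:lazyParent} and local finiteness, whereas the absence of any continuous (drift or diffusive) component is the substantive point that distinguishes the negative-exponent regime. The crux is to leverage the persistent unique atom at $0$ provided by Lemma~\ref{lem:lazyParent}, together with the branching property, to force the individual motion to be trivial; making the identification of the ``atom staying at $0$'' with a single genealogical individual fully rigorous within the framework of Section~6.1 of \cite{BeMa} is where the care is needed.
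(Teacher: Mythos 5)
Your three-step decomposition (monotone, then piecewise constant, then finitely many jumps) is sensible, and the first and third steps are essentially correct bookkeeping given Lemma~\ref{lem:lazyParent} and local finiteness. But the second step --- which you yourself single out as the crux --- contains a genuine gap rather than a mere technicality. You assert that ``children are located strictly to the right of their parents'' as a consequence of Lemma~\ref{lem:lazyParent}, and use this to conclude that the identically-zero lineage of the left-most atom follows a single individual, namely the root. Neither half of this is available at that stage. Lemma~\ref{lem:lazyParent} is a statement about the point measures $\S_t$, not about the relative displacement of a child at a birth event: a priori a branching L\'evy process may beget children at relative displacement $0$, and may also displace the parent at the very instant of birth. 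In particular your argument does not exclude the following scenario: the root begets a child at displacement $0$ at some time $s$ and then drifts or jumps to the right, the child inheriting the unique atom at $0$. The lineage of the left-most atom of $\S_t$ is then still identically $0$, yet the root is not static, and other lineages $x_{j,t}$ would carry a nontrivial continuous or diffusive component. Ruling out the Gaussian part, the drift, and zero-displacement births is precisely the content of the lemma, and your proof assumes it at the point where it is needed.

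The paper's proof avoids this genealogical surgery entirely by invoking the pathwise many-to-one formula of Lemma 6.4 in \cite{BeMa}: under an exponential tilting, the law of a typical ancestral lineage is that of a single L\'evy process $(\xi_s)_{s\geq 0}$ (the spine). Lemma~\ref{lem:lazyParent} then translates into $\xi_t\geq 0$ a.s.\ and $\P(\xi_t=0)>0$, which forces $\xi$ to be a compound Poisson subordinator, whose paths are non-decreasing step functions; applying the many-to-one formula once more to the indicator of the complementary event transfers this to all lineages simultaneously via a union bound. If you wish to salvage a direct argument, you would first need to show that the L\'evy measure cannot charge zero displacements and that the diffusion and drift coefficients of the particle motion vanish --- at which point you will essentially have redone the spine computation.
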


\begin{proof}
We recall the pathwise many-to-one formula (\cite[Lemma 6.4]{BeMa}): For every non-negative measurable functional $f$ on the space of c\`adl\`ag paths on $[0,t]$, we have 
\[
 \E\left( \sum_{j \geq 1} f(x_{j,t}(s): 0\leq s \leq t) \right) = \E\left( \e^{-\theta \xi_t + t \kappa(\theta)} f(\xi_s: 0\leq s \leq t) \right),
\]
where $(\xi_s)_{s\geq 0}$ is a certain L\'evy process and 
$\kappa(\theta) = \log \E(\crochet{\S_1, \e_{- \theta}})$ (recall the non-degeneracy condition \eqref{e:nondegLap}). 

In this setting, Lemma~\ref{lem:lazyParent} implies that $\xi_t\geq 0$ a.s. and $\P(\xi_t=0)>0$, so $(\xi_s)_{s\geq 0}$ must be a compound Poisson process in $[0,\infty)$. In particular, its paths are non-decreasing step functions a.s. We complete the proof using the pathwise many-to-one formula with $f(x) = 1-\ind{x \text{ is a non-decreasing step function}}$ and the union bound.
\end{proof}

Lemma~\ref{l:BM2} enables us to define a birth-time and a generation for each atom of $\S_t$. Specifically,
the birth-time of the $j$-th atom of $\S_t$ is the instant $b_{j,t}\leq t$ at which its ancestral lineage 
makes its last jump,
$$b_{j,t}\coloneqq \inf\{s\leq t: x_{j,t}(s)=x_{j,t}(t)\},$$
and its generation $g_{j,t}$ is the number of jumps of its ancestral lineage,
$$g_{j,t}\coloneqq \#\{0<s\leq t: x_{j,t}(s-)< x_{j,t}(s)\}.$$
Plainly, if $t'>t$, the $j$-th atom of $\S_t$ is also the $j'$-th atom of $\S_{t'}$ for some $j'\geq j$, and then
the ancestral lineage $x_{j',t'}$ remains constant on the time interval $[t,t']$, so the birth-time and generation remain of course unchanged. 

We can now define for every $n\geq 0$ the point process $\Y_n$ in $\R_+^2$ whose atoms are given by the birth-times and locations of the atoms with generation $n$ that appear in the branching-stable process.
Specifically, for every $t\geq 0$ and $B\in {\mathcal B}(\R_+)$, we set
$$\Y_n([0,t]\times B)\coloneqq \#\{j\geq 1: g_{j,t}=n\ \& \ x_{j,t}(t)\in B\}.$$
Observe that for every $t\geq 0$, 
one recovers $\S_t$ as
$$\S_t(B)=\sum_{n\geq 0} \Y_n([0,t]\times B), \qquad B\in {\mathcal B}(\R_+),$$
so the proof of Theorem~\ref{T1}(ii) is now reduced to establishing that the process $(\Y_n)_{n\geq 0}$
is actually a branching random walk, and that its reproduction law $\Y_1$ is of the same type as $\Z_1$
for an appropriate choice of $\lambda$.

\begin{lemma}\label{lem:brwplan} The process $(\Y_n)_{n\geq 0}$ is a branching random walk in $\R_+^2$.
\end{lemma}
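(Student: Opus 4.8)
The plan is to read the branching property of $(\S_t)_{t\geq0}$ at the level of the genealogy rather than at a fixed time, and thereby exhibit the recursive decomposition that defines a branching random walk. Since $\Y_0=\delta_{(0,0)}$ by construction, it is enough to write $\Y_{n+1}$ as a superposition of i.i.d.\ copies of $(\Y_n)_{n\geq0}$, translated by the first-generation atoms and independent of them.

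First I would isolate the first-generation children of the ancestor. By Lemma~\ref{lem:lazyParent} the ancestor stays at $0$ for all time, and by Lemma~\ref{l:BM2} every ancestral lineage is a non-decreasing step function; hence an atom has generation $1$ exactly when its lineage makes a single jump, the one carrying it away from the ancestor. Recording the birth-times and positions of these atoms produces precisely $\Y_1$, a point process on $\R_+^2$ that is locally finite because any box $[0,t]\times[0,a]$ contains no more of them than $\S_t([0,a])<\infty$. Write $\Y_1=\big((s_k,x_k)\big)_{k\geq1}$.

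The crux is the subtree decomposition. For each $k$, let $(\Y^{(k)}_n)_{n\geq0}$ record, after recentring at $(s_k,x_k)$, the birth-times and positions of the generation-$n$ atoms of the sub-population founded by the $k$-th first-generation child. I would argue, using the genealogical branching structure of \cite[Section~6.1]{BeMa}, that the families $(\Y^{(k)})_{k\geq1}$ are i.i.d.\ copies of $(\Y_n)_{n\geq0}$ and are independent of $\Y_1$. Two features of the negative-exponent regime make the recentring a genuine translation by $(s_k,x_k)$: individuals are static (Lemma~\ref{lem:lazyParent}), so the spatial shift is simply by the birth position $x_k$; and the branching property of Lemma~\ref{l:BM1}(i) holds for every $r>0$ with fresh copies whose law does not depend on $r$, which expresses the age-homogeneity of reproduction and makes the temporal shift a translation by the birth-time $s_k$.

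It remains to match generations. An atom has generation $n+1$ in the whole population if and only if it lies in exactly one subtree, founded by some child $(s_k,x_k)$, and has generation $n$ inside that subtree, for erasing the first jump of its lineage lowers the jump count by exactly one. Reassembling the contributions yields
$$\Y_{n+1}=\bigsqcup_{k\geq1}\big((s_k,x_k)+\Y^{(k)}_n\big),\qquad (s_k,x_k)_{k\geq1}=\Y_1,$$
with $(\Y^{(k)})_{k\geq1}$ i.i.d.\ copies of $(\Y_n)_{n\geq0}$ independent of $\Y_1$; together with $\Y_0=\delta_{(0,0)}$ this is exactly the branching property of a branching random walk in $\R_+^2$ with reproduction law $\Y_1$. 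I expect the main obstacle to be the independence-and-common-law statement for the subtrees, that is, upgrading the fixed-time branching property of $(\S_t)_{t\geq0}$ to a genealogical regeneration at each birth event; the delicate point is to justify the age-homogeneity underlying the temporal recentring purely from the time-translation invariance built into Lemma~\ref{l:BM1}(i).
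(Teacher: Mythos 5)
Your plan correctly identifies both the target identity $\Y_{n+1}=\bigsqcup_{k\geq1}\bigl((s_k,x_k)+\Y^{(k)}_n\bigr)$ and the one step on which everything hinges, but it does not actually carry that step out, and this is a genuine gap rather than a routine verification. The branching property of Lemma~\ref{l:BM1}(i) is a statement about regeneration at a \emph{deterministic} time $r$, and the decomposition it provides is indexed by \emph{all} atoms of $\S_r$, whatever their generation. What you need is regeneration at the collection of first-generation birth events, which occur at random times $(s_k)_{k\geq1}$ and form a random, generally infinite, family. This is a strong branching property along an \emph{optional line} in the sense of Jagers, and it is a theorem (Theorem 4.14 in \cite{Jagers}) with hypotheses to check, not a formal consequence of the fixed-time branching property; your appeal to ``the genealogical branching structure of \cite[Section~6.1]{BeMa}'' merely restates the claim. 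Since you yourself flag this as ``the main obstacle,'' the proposal as written establishes the easy bookkeeping (matching of generations, translation by $(s_k,x_k)$ thanks to static particles and age-homogeneity) and leaves the substantive point open.

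The paper's route around this obstacle is worth noting: it first restricts the process to the dyadic skeleton $D_\ell=\{k2^{-\ell}:k\geq0\}$, where $(\S_t)_{t\in D_\ell}$ is an honest discrete-time branching random walk, so that the first generation of the skeleton forms an optional line of a discrete branching random walk and Jagers' theorem applies directly; this makes each $(\Y^{(\ell)}_n)_{n\geq0}$ a branching random walk in $\R_+^2$. It then lets $\ell\to\infty$, using Lemma~\ref{l:BM2} to guarantee that the skeleton generation $g^{(\ell)}_{j,t}$ stabilises to $g_{j,t}$ and that $b^{(\ell)}_{j,t}\to b_{j,t}$, so that $\Y^{(\ell)}_n\to\Y_n$ vaguely, and finally observes that vague convergence suffices to transfer the branching property because children are born after and to the right of their parents. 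If you want to complete your direct continuous-time argument, you must supply a proof of the strong branching property at the random first-generation birth times; the cleanest way to do so is essentially the paper's discretisation, so I would recommend adopting it.
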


\begin{proof} We shall establish the first claim by approximation based on discrete time skeletons.
For every fixed $\ell\geq 0$, we 
consider the restriction of the branching-stable process to times in $D_{\ell}\coloneqq \{k2^{-\ell}: k\geq 0\}$. Clearly $(\S_t)_{t\in D_{\ell}}$ is a branching random walk in discrete time, whose ancestral lineages are simply obtained from those of $(\S_t)_{t\geq 0}$ by restriction to $D_{\ell}$.

For $t\in D_{\ell}$, we write $b^{(\ell)}_{j,t}$ for the birth-time of the $j$-th atom of $\S_t$ and $g^{(\ell)}_{j,t}$ for its generation 
as viewed from the skeleton $(\S_t)_{t\in D_{\ell}}$, that is
$$b^{(\ell)}_{j,t}\coloneqq \inf\{s\leq t, s\in D_{\ell} : x_{j,t}(s)=x_{j,t}(t)\} = 2^{-\ell}\lceil 2^{\ell} b_{j,t}\rceil,$$
and 
$$ g^{(\ell)}_{j,t}\coloneqq \#\{s< t, s\in D_{\ell} : x_{j,t}(s) < x_{j,t}(s+2^{-\ell})\}.$$
We also write $\Y_n^{(\ell)}$ for the random point measure such that for every $t\in D_{\ell}$ 
and $B\in {\mathcal B}(\R_+)$:
$$\Y^{(\ell)}_n([0,t]\times B)= \#\{j\geq 1: g^{(\ell)}_{j,t}=n\ \& \ x_{j,t}(t)\in B\}.$$

In particular, $\Y^{(\ell)}_0=\delta_{(0,0)}$ and $\Y^{(\ell)}_1$ is the point process induced by the birth-times and locations of individuals of the first generation. These form an optional line in the terminology of Jagers \cite{Jagers} at which the strong branching property holds (Theorem 4.14 in \cite{Jagers}). By iteration, we get that $(\Y^{(\ell)}_n)_{n\geq 0}$ is a branching random walk in $\R_+^2$. 

To conclude the proof, we simply need to consider limits as $\ell\to \infty$. Indeed, we have plainly for every dyadic rational time $t$ and $j\geq 1$ that 
$| b^{(\ell)}_{j,t} - b_{j,t}| \leq 2^{-\ell} $, and it also follows from Lemma~\ref{l:BM2}
that $g^{(\ell)}_{j,t}=g_{j,t}$ provided that $\ell$ is sufficiently large. 
Since the restriction of $\Y_n$ to $[0,t]\times \R_+$ is given by
$$\sum_{j\geq 1}\ind{g_{j,t}=n} \delta_{(b_{j,t}, x_{j,t}(t))},$$
and similarly the restriction of $\Y^{(\ell)}_n$ to $[0,t]\times \R_+$ by
$$\sum_{j\geq 1}\ind{g^{(\ell)}_{j,t}=n} \delta_{(b^{(\ell)}_{j,t}, x_{j,t}(t))},$$
we deduce that $\Y^{(\ell)}_n$ converges vaguely on $\R_+^2$ to $\Y_n$ as $\ell\to \infty$. On the other hand, because children are always born at the right of their parent (and are of course also born after their parent), 
vague convergence suffices to ensure that the branching property for $(\Y^{(\ell)}_n)_{n\geq 0}$ can be transferred to $(\Y_n)_{n\geq 0}$.
\end{proof}

We now turn our attention to the reproduction law of the branching random walk $(\Y_n)_{n\geq 0}$,
and in this direction, it is convenient to handle all together the atoms of the first generation which are born at the same time. That is, we consider the point measure ${\mathbf M}$ on $(0,\infty)\times \calP_+^*$, where
 the atoms $(t,\x)$ of ${\mathbf M}$ are such that 
$\x=(x_k)_{k\geq 1}\in \calP_+^*$ is the ranked sequence of the atoms of the first generation in the branching-stable process which are born at time $t>0$. 
In particular, we have 
$$ \int_{(0,\infty)\times \calP^*_+} \crochet{\x,f(t,\cdot)} {\mathbf M}(\dd t, \dd \x) = \crochet{\Y_1,f},$$
for every measurable nonnegative function $f$ on $(0,\infty)^2$, with the notation $$\crochet{\x,f(t,\cdot)}= \sum_{k\geq 1} f(t,x_k).$$

\begin{lemma}\label{l:scalbrw} In the notation above, ${\mathbf M}$ is a Poisson point process in $(0,\infty)\times \calP_+^*$ with intensity $\dd t \otimes M(\dd \x)$, where $M$ is some measure on $\calP_+^*$ which satisfies the following self-similarity property: for every $c>0$, the image of $M$ by the dilation $\x\mapsto c \x$ is $c^{-\alpha}M$.
\end{lemma}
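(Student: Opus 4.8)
The plan is to establish two separate facts: first, that $\mathbf{M}$ is a Poisson point process with an intensity of the product form $\dd t \otimes M(\dd\x)$, and second, that this spatial intensity $M$ is self-similar with exponent $-\alpha$. These correspond to two structural features of the branching-stable process $(\S_t)_{t\geq 0}$ that have already been isolated: the branching property (i) of Lemma~\ref{l:BM1}, which forces time-homogeneity, and the scaling property (ii), which forces the dilation identity.

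For the Poissonian structure, I would argue as follows. By construction, the atoms $(t,\x)$ of $\mathbf{M}$ record the first-generation birth events of $(\S_t)_{t\geq 0}$, i.e. the reproduction events of the ancestral atom at $0$ (its ancestral lineage makes exactly its first jump at such a $t$, producing the cluster $\x$ of simultaneous children). The branching property (i) asserts that the future evolution after time $r$ is built from i.i.d. copies attached to the atoms present at time $r$; applied to the \emph{single} ancestor at $0$, which survives eternally by Lemma~\ref{lem:lazyParent}, this says that the reproduction of the ancestor after time $r$, conditionally on the past, is an independent copy of the whole reproduction process shifted by $r$. This is precisely the statement that the point process $\mathbf{M}$ on $(0,\infty)\times\calP_+^*$ has independent and stationary increments in the time variable $t$. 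A point process on a half-line with independent, stationary increments (and no fixed atoms, which holds here since a deterministic reproduction time would violate self-similarity) is a Poisson point process whose intensity factorizes as $\dd t \otimes M(\dd\x)$ for some measure $M$ on $\calP_+^*$; this is the classical characterization of Poisson point processes via complete independence, and I would invoke it directly, taking $M(\dd\x)$ to be the intensity of birth-clusters per unit time.

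For the self-similarity of $M$, I would use scaling property (ii): for every $c>0$, $(\S_{c^{-\alpha}t})_{t\geq 0} \egaldistr (c\S_t)_{t\geq 0}$. Under the map $\S_t \mapsto c\S_t$, each atom location is dilated by $c$, so in particular every first-generation birth-cluster $\x$ is transformed into $c\x$; and the time-change $t\mapsto c^{-\alpha}t$ rescales the birth-time coordinate. Hence the point process $\mathbf{M}$ is mapped to a Poisson point process whose atoms are $(c^{-\alpha}t, c\x)$, with intensity $\dd t \otimes (\text{image of } M \text{ under } \x\mapsto c\x)$, after absorbing the Jacobian $c^{\alpha}$ from the time-dilation $\dd(c^{-\alpha}t)=c^{-\alpha}\dd t$ against the $\dd t$ appearing in the transformed intensity. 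Equating the two intensities via the uniqueness of the Poisson intensity then yields that the image of $M$ under $\x\mapsto c\x$ equals $c^{-\alpha}M$, which is exactly the claimed identity. I would track the exponent bookkeeping carefully here, since the factor $c^{-\alpha}$ on the spatial side arises precisely from cancelling the time-rescaling factor against the homogeneous $\dd t$.

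The main obstacle I anticipate is the rigorous justification that $\mathbf{M}$ genuinely inherits \emph{complete} independence (independence over disjoint time-intervals) and stationarity purely from the branching property, rather than merely the weaker increment properties one reads off the discrete skeletons. The cleanest route is to reuse the discrete-skeleton approximation of Lemma~\ref{lem:brwplan}: on each dyadic skeleton $D_\ell$ the first-generation births form an optional line with the strong branching property (Theorem 4.14 in \cite{Jagers}), which gives time-homogeneity and independence at the skeleton level; passing to the limit $\ell\to\infty$, as in the proof of Lemma~\ref{lem:brwplan}, transfers these to the continuous-time object $\mathbf{M}$ and identifies it as Poisson. The self-similarity step is then comparatively routine once the Poissonian structure and the uniqueness of the intensity are in hand.
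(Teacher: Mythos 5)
Your proposal follows essentially the same route as the paper: the branching property gives independent and stationary increments for the first-generation birth events of the eternal ancestor, whence the Poisson structure with product intensity $\dd t \otimes M(\dd \x)$, and the scaling property of $(\S_t)_{t\geq 0}$ transferred to the intensity (with exactly the Jacobian bookkeeping you describe) gives the dilation identity for $M$. The only detail the paper adds that you gloss over is how to guarantee local finiteness: it restricts to sets $\mathcal{A}\subset\calP_+^*$ with $\sup\{x_1:\x\in\mathcal{A}\}<\infty$ (so that $N_{\mathcal{A}}(t)<\infty$ a.s., since otherwise $\S_t$ would have infinitely many atoms in a compact set) and then recovers full Poissonian independence from the fact that Poisson counting processes in a common filtration with no common jump times are independent, rather than invoking a skeleton limit.
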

\begin{proof} Let ${\mathcal A}\subset \calP_+^*$ be a measurable set with $a\coloneqq \sup\{x_1: \x\in {\mathcal A}\}<\infty$, and introduce the process
$$N_{\mathcal A}(t)={\mathbf M}([0,t]\times {\mathcal A}), \qquad t\geq 0,$$
which counts the number of times when the ancestor begets a progeny in ${\mathcal A}$. 
Plainly $N_{\mathcal A}(t)<\infty$ a.s. for every $t\geq 0$, since otherwise the point measure $\S_t$ would have infinitely many atoms in $[0,a]$. The branching property of $(\S_t)_{t\geq 0}$ implies that the counting process 
$N_{\mathcal A}$ has independent and stationary increments in the natural filtration generated by the branching-stable process and its genealogy. Thus $N_{\mathcal A}$ is a Poisson process with intensity which we denote by $M({\mathcal A})$.

Next, if we consider the counting processes $N_{{\mathcal A}_1}, N_{{\mathcal A}_2}, \ldots$ associated to 
pairwise disjoint measurable sets ${\mathcal A}_i\subset \calP_+^*$, we get a family of Poisson processes in the same filtration which have no commun jump-times and therefore they are independent. This entails that ${\mathbf M}$ is a Poisson point process, that $M$ is a measure on $\calP_+^*$ with $M({\mathcal A})<\infty$ whenever $\sup\{x_1: \x\in {\mathcal A}\}<\infty$, and that the intensity of ${\mathbf M}$ is given by $\dd t \otimes M(\dd \x)$.

We finally note that from the self-similarity property of branching-stable processes that 
the reproduction law $\Y_1$ is invariant by the scaling transformation which maps every atom $(t,x)\in(0,\infty)^2$ to
$(c^{-\alpha}t,cx)$. This entails that the law of ${\mathbf M}$ is invariant by the transformation
$(t,\x)\mapsto (c^{-\alpha}t,c\x)$. {\it A fortiori}, the same holds for its intensity $\dd t \otimes M(\dd \x)$, 
and in turn this shows the self-similarity property of $M$. 
\end{proof}

We can now complete the proof of Theorem~\ref{T1}(ii) by checking that the measure 
$M$ in Lemma~\ref{l:scalbrw} has necessarily the form \eqref{defLambda}.

\begin{lemma}\label{lem_triplet}
Let $M$ be a measure on $\calP_+^*$ with $M(\{\x\in \calP_+^*: x_1\leq 1\})<\infty$, such that for every $c>0$, the image of $M$ by the dilation $\x\mapsto c \x$ is $c^{-\alpha}M$.
There exists a unique finite measure $m$ on $\calP_+^*$ with $m(\{\x\in \calP_+^*: x_1\neq 1\})=0$
such that 
$$\int_{\calP_+^*} F(\x) M(\dd \x) = \int_0^{\infty} y^{\alpha-1} \int_{\calP_+^*} F(y\x) m(\dd \x) \dd y ,
$$
for every measurable function $F: \calP_+^*\to \R_+$.
\end{lemma}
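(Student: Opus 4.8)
The plan is to establish a \emph{polar decomposition} of the self-similar measure $M$: writing each $\x\in\calP^*_+$ as $\x = x_1\cdot(x_1^{-1}\x)$, the first coordinate $x_1\in(0,\infty)$ plays the role of a radius, while $x_1^{-1}\x$, which lies on the ``unit sphere'' $\Sigma\coloneqq\{\x\in\calP^*_+ : x_1=1\}$, plays the role of a direction. Concretely, I would introduce the bijection
$$\Phi\colon (0,\infty)\times\Sigma\to\calP^*_+,\qquad \Phi(y,\omega)=y\omega,$$
with measurable inverse $\x\mapsto(x_1, x_1^{-1}\x)$, and transport $M$ to the measure $\tilde M\coloneqq(\Phi^{-1})_*M$ on $(0,\infty)\times\Sigma$. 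The desired identity is exactly the assertion that $\tilde M=y^{\alpha-1}\dd y\otimes m$, since then $\int F\,\dd M=\int F(y\omega)\,\tilde M(\dd y,\dd\omega)$ reproduces the right-hand side of the claimed formula, with $m$ read as a measure on $\{x_1=1\}\subset\calP^*_+$ (so that automatically $m(\{x_1\neq1\})=0$). The whole point of these coordinates is that the dilation $\x\mapsto c\x$ becomes $(y,\omega)\mapsto(cy,\omega)$, so the self-similarity hypothesis on $M$ collapses to a single-variable scaling relation on $\tilde M$.

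Next I would pass to logarithmic radial coordinates, setting $u=\log y$ and letting $\hat M$ be the image of $\tilde M$ under $(y,\omega)\mapsto(\log y,\omega)$ on $\R\times\Sigma$. Under this change the dilations become the translations $T_s\colon(u,\omega)\mapsto(u+s,\omega)$, and the hypothesis reads $(T_s)_*\hat M=\e^{-\alpha s}\hat M$ for every $s\in\R$. A direct computation then shows that the reweighted measure $\nu\coloneqq\e^{-\alpha u}\hat M$ is genuinely \emph{translation invariant}, $(T_s)_*\nu=\nu$ for all $s$. Two preliminary finiteness remarks make this rigorous: the hypothesis $M(\{x_1\le1\})<\infty$ says $\hat M((-\infty,0]\times\Sigma)<\infty$, and applying the scaling relation gives $\hat M((-\infty,a]\times\Sigma)=\e^{\alpha a}\hat M((-\infty,0]\times\Sigma)<\infty$ for every $a$; splitting $\R\times\Sigma$ into the strips $\{n\le u<n+1\}\times\Sigma$ then shows that both $\hat M$ and $\nu$ are $\sigma$-finite.

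The core step is to prove that a $\sigma$-finite measure $\nu$ on $\R\times\Sigma$ invariant under translations of the first coordinate is necessarily of product form $\dd u\otimes\mu$. For this I would define $\mu(B)\coloneqq\nu([0,1)\times B)$ for $B\in\mathcal B(\Sigma)$; the bound $\mu(\Sigma)\le\hat M([0,1)\times\Sigma)<\infty$ shows $\mu$ is in fact a \emph{finite} measure, so no integrability subtleties arise. Translation invariance gives $\nu([a,a+1)\times B)=\mu(B)$ for all $a$, and decomposing $[0,1)$ into $2^k$ dyadic subintervals shows $\nu(I\times B)=|I|\,\mu(B)$ first for dyadic intervals $I$ and then, by additivity and monotone limits, for all intervals. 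Since the rectangles $I\times B$ generate $\mathcal B(\R)\otimes\mathcal B(\Sigma)$ and both $\nu$ and $\dd u\otimes\mu$ are $\sigma$-finite, uniqueness of measure extension yields $\nu=\dd u\otimes\mu$. Undoing the two changes of variables, $\hat M=\e^{\alpha u}\dd u\otimes\mu$ becomes $\tilde M=y^{\alpha-1}\dd y\otimes\mu$, which is the claim with $m\coloneqq\mu$; evaluating the formula on $\{x_1\le1\}$ moreover identifies $m(\Sigma)=\alpha\,M(\{x_1\le1\})$. Finally, uniqueness of $m$ follows by testing against product functions $F(\x)=g(x_1)\,G(x_1^{-1}\x)$ with $\int_0^\infty y^{\alpha-1}g(y)\,\dd y=1$, which recovers $\int G\,\dd m$ from the left-hand side. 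I expect the main obstacle to be purely measure-theoretic bookkeeping --- checking measurability of $\Phi^{-1}$, the $\sigma$-finiteness reductions, and the product-measure extension --- rather than anything conceptual, since once the self-similarity is linearized into translation invariance the factorization is forced.
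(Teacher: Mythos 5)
Your proof is correct, and it reaches the conclusion by a route that differs from the paper's in its key technical step. The paper also separates the radial coordinate $x_1$ from the "direction", but it does so by invoking the existence of a regular disintegration of $M$ with respect to $x_1$ (after noting that self-similarity forces $M(\{x_1\leq y\})=cy^{\alpha}$), and then argues that for each $c>0$ the kernel satisfies $m(cy,\dd\x)=(\x\mapsto c\x)_{*}m(y,\dd\x)$ for almost every $y$, from which a single representative $m$ is extracted; this last step involves a measurable-selection / null-set bookkeeping issue that the paper glosses over. You instead transport $M$ through the explicit bijection $\x\mapsto(x_1,x_1^{-1}\x)$ and then through $y\mapsto\log y$, so that self-similarity becomes exact translation invariance of the reweighted measure $\e^{-\alpha u}\hat M$, and you establish the product factorization $\dd u\otimes\mu$ by hand on dyadic rectangles. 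This buys you a self-contained argument that avoids both the disintegration theorem (which requires the underlying space to be standard Borel) and the almost-everywhere version-selection; it also lets you verify the finiteness of $m$ and its uniqueness explicitly (the paper does not spell out the uniqueness part at all, though it is asserted in the statement), at the cost of somewhat more elementary measure-theoretic legwork. Your finiteness reductions, the translation-invariance computation for $\nu=\e^{-\alpha u}\hat M$, and the uniqueness argument via test functions $F(\x)=g(x_1)G(x_1^{-1}\x)$ are all sound.
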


\begin{proof}
The self-similarity entails that $M(\{\x\in \calP_+^*: x_1\leq y\}) = c y^{\alpha}$ for all $y>0$, where $c$ is some finite constant. We then consider a regular disintegration of $M$ with respect to $x_1$; specifically there is a kernel $(m(y, \dd \x))_{y>0}$ of finite measures on $\calP_+^*$ such that for every measurable functional $G: \R_+\times \calP^*_+\to \R_+$, 
$$\int_{\calP_+^*}G(x_1, \x)M(\dd \x) = \int_0^{\infty} y^{\alpha-1} \int_{\calP_+^*} G(y, \x) m(y,\dd \x) \dd y.
$$
The self-similarity of $M$ further implies that for every $c>0$, the image of $ m(y,\dd \x)$ by the dilation $\x \mapsto c \x$
coincides with $ m(cy,\dd \x)$ for almost all $y>0$, which enables us to define a finite measure $m$ on $\calP_+^*$
such that for almost all $y>0$, $m(y,\dd \x)$ coincides with the image of $m$ by the dilation $\x\mapsto y\x$, and the representation of the statement follows. 
\end{proof}

\section{Asymptotic behavior of the cumulative distribution function}
\label{sec:martingale}
 The purpose of this section is to analyze the asymptotic behaviors of 
$\S_t([0,x])$ as $x\to\infty$, respectively as $t\to \infty$, where $(\S_t)_{t\geq0}$ is a branching-stable process with scaling exponent $-\alpha$. Of course, the two are closely related, thanks to the scaling property, and with no loss of generality, we shall mainly consider the first and further focus on the case $t=1$.

Recall from the previous section that $(\Z_n)_{n\geq 0}$ is the branching random walk in the quadrant $\R_+^2$ from which the branching-stable process $(\S_t)_{t\geq 0}$ is constructed (thanks to Theorem~\ref{T1}(ii), we may agree from now on that $\S=\X$ in the notation there), and write $\mu_n$ for its intensity measure, that is
$$\int_{(0,\infty)^2} f(t,x)\mu_n(\dd t, \dd x)= \E(\crochet{\Z_n,f}).$$

\begin{lemma}\label{L1} For every $n\geq 1$, there is the identity
\[ \mu_n(\dd t ,\dd x) = \frac{c(\lambda)^n \Gamma(\alpha)^n}{(n-1)! \Gamma(\alpha n)} t^{n-1}x^{n\alpha-1} \dd t \dd x.\]
\end{lemma}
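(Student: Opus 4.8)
My plan is to exploit the fact that the intensity measures of a branching random walk satisfy a convolution recursion, and then to reduce the computation of $\mu_n$ to a pair of elementary one-dimensional integrals. The branching property of $(\Z_n)_{n\geq 0}$, combined with the first-moment (many-to-one) formula, yields for every measurable $f\geq 0$ on $(0,\infty)^2$ the identity
$$\E(\crochet{\Z_n,f}) = \int_{(0,\infty)^2}\int_{(0,\infty)^2} f\big((s,y)+(s',y')\big)\,\mu_1(\dd s,\dd y)\,\mu_{n-1}(\dd s',\dd y'),$$
since positions in $\R_+^2$ add up along ancestral lineages. In other words, $\mu_n$ is the convolution $\mu_1 \ast \mu_{n-1}$ with respect to componentwise addition on $(0,\infty)^2$. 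I would therefore argue by induction on $n$, the base case $n=1$ being exactly \eqref{e:intens1}.

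For the inductive step, I would insert the assumed form of $\mu_{n-1}$, namely the density $\frac{c(\lambda)^{n-1}\Gamma(\alpha)^{n-1}}{(n-2)!\,\Gamma(\alpha(n-1))}\,s^{n-2}\,y^{(n-1)\alpha-1}$, together with the density $c(\lambda)\,y^{\alpha-1}$ of $\mu_1$, into the convolution. Because each density factorizes as a function of the time variable times a function of the space variable, the convolution splits into the product of a time-integral and a space-integral. The time-integral is elementary,
$$\int_0^t (t-s)^{n-2}\,\dd s = \frac{t^{n-1}}{n-1},$$
while the space-integral is a Beta integral,
$$\int_0^x y^{\alpha-1}(x-y)^{(n-1)\alpha-1}\,\dd y = x^{n\alpha-1}\,\frac{\Gamma(\alpha)\,\Gamma((n-1)\alpha)}{\Gamma(n\alpha)}.$$

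Collecting the constants then reproduces the claimed expression: the factor $c(\lambda)$ from $\mu_1$ combines with $c(\lambda)^{n-1}$ to give $c(\lambda)^n$; the extra $\Gamma(\alpha)$ produced by the Beta integral upgrades $\Gamma(\alpha)^{n-1}$ to $\Gamma(\alpha)^n$; the factors $1/(n-2)!$ and $1/(n-1)$ merge into $1/(n-1)!$; and $\Gamma((n-1)\alpha)$ cancels against the $\Gamma(\alpha(n-1))$ in the denominator of $\mu_{n-1}$, leaving $1/\Gamma(n\alpha)$. I do not anticipate any genuine obstacle here: the only two points requiring a little care are the justification of the convolution identity $\mu_n=\mu_1\ast\mu_{n-1}$ (immediate from the branching property once one observes that absolute positions are sums of the successive reproduction displacements) and the bookkeeping of the Gamma-function constants, which is the sort of routine verification that an induction on $n$ handles cleanly.
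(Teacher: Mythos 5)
Your proof is correct and follows essentially the same route as the paper: both rest on the identity $\mu_n=\mu_1^{*n}$ coming from the branching property, with the base case given by \eqref{e:intens1}. The only (immaterial) difference is that you carry out the routine verification by computing the convolution integrals directly via a Beta integral, whereas the paper suggests checking the formula through Laplace transforms.
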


\begin{proof} For $n=1$, this identity has been already checked in \eqref{e:intens1}. Then recall that 
the intensity measures in a branching random walk are given by convolution powers of the intensity of the reproduction law, so $\mu_n=\mu_1^{*n}$. The general formula follows straightforwardly, for instance by computing Laplace transforms.
\end{proof}

For the sake of simplicity, we assume throughout the rest of this work that 
\begin{equation}
 \label{eqn:normalization}
 c(\lambda) \Gamma(\alpha)=1,
\end{equation}
as the general case can be recovered easily by dilation. Hence, Lemma~\ref{L1} reads
\begin{equation}\label{intensL}
 \E(\crochet{\Z_n,f})= \frac{1}{(n-1)! \Gamma(\alpha n)} \int_0^{\infty}
 \int_0^{\infty} f(t,x) t^{n-1}x^{n\alpha-1} \dd t \dd x.
 \end{equation}
This yields the intensity measure of the branching-stable point measure $\S_t$, whose cumulative distribution function bears a simple relation to so-called Wright (generalized Bessel) function
$$\phi(\rho,\beta, z)\coloneqq \sum_{k=0}^{\infty} \frac{z^k}{k! \Gamma(\rho k+\beta)}.$$
We refer to Gorenflo, Luchko and Mainardi \cite{GoLuMa} for a survey of analytic properties of this function and its applications to partitions in combinatorics and to certain PDE's of fractional order. 
\begin{proposition} \label{P1} \begin{enumerate}
\item[(i)] For every $x>0$ and $t>0$, we have
$$\E(\S_t([0,x]))= \phi(\alpha,1,tx^{\alpha}).$$
\item[(ii)] For every $\theta>0$, we have
$$\E\left(\int_{[0,\infty)}\e^{-\theta x} \S_t(\dd x) \right) = \exp(t\theta^{-\alpha}).$$
\end{enumerate}
\end{proposition}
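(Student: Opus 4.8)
The plan is to combine the genealogical decomposition \eqref{def_X} of $\S_t=\X_t$ into generations with the explicit intensity measures supplied by Lemma~\ref{L1}. Both identities then follow by taking expectations generation by generation and evaluating elementary integrals. Concretely, for a nonnegative measurable $g$ I start from $\crochet{\S_t,g}=\sum_{n\geq 0}\int_{[0,t]\times\R_+}g(x)\Z_n(\dd s,\dd x)$, with $\Z_0=\delta_{(0,0)}$. Since every term is nonnegative, Tonelli's theorem permits exchanging the expectation with the sum over $n$, so the whole computation reduces to the intensities $\mu_n$ recorded in \eqref{intensL}.

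For part (i) I take $g=\indset{[0,x]}$. The generation $n=0$ contributes the static ancestor at the origin, namely $\indset{[0,x]}(0)=1$; for $n\geq 1$, \eqref{intensL} gives
$$\frac{1}{(n-1)!\,\Gamma(\alpha n)}\int_0^t s^{n-1}\dd s\int_0^x y^{n\alpha-1}\dd y=\frac{t^n x^{n\alpha}}{n!\,\Gamma(\alpha n+1)},$$
where I have used $n\,(n-1)!=n!$ and $n\alpha\,\Gamma(\alpha n)=\Gamma(\alpha n+1)$. Summing over $n\geq 0$ reproduces exactly the series $\sum_{n\geq 0}(tx^{\alpha})^n/(n!\,\Gamma(\alpha n+1))$ defining $\phi(\alpha,1,tx^{\alpha})$, which settles (i).

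For part (ii) I take $g(y)=\e^{-\theta y}$. The only modification is that the spatial integral becomes a Gamma integral, $\int_0^{\infty}\e^{-\theta y}y^{n\alpha-1}\dd y=\Gamma(\alpha n)\,\theta^{-\alpha n}$, whose factor $\Gamma(\alpha n)$ cancels the one in the denominator of \eqref{intensL}; together with $\int_0^t s^{n-1}\dd s=t^n/n$ this leaves the $n$-th term equal to $(t\theta^{-\alpha})^n/n!$ (the $n=0$ term again contributing $1$). Summing over $n\geq 0$ yields $\exp(t\theta^{-\alpha})$.

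The computations are entirely routine, so I do not anticipate a genuine obstacle; the two points deserving a word of care are the correct bookkeeping of the generation $n=0$ (the ancestor at $0$, which matches the $k=0$ term of each series), and the justification of the term-by-term interchange, which is immediate from Tonelli since all integrands are nonnegative. Once Lemma~\ref{L1} and the normalization \eqref{eqn:normalization} are in hand, everything amounts to recognizing the resulting power series.
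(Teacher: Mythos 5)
Your proposal is correct and is essentially the paper's own argument made explicit: both rest on the decomposition \eqref{def_X} of $\S_t$ into generations, the intensity formula \eqref{intensL}, and recognizing the resulting series as the Wright function (the paper merely obtains (ii) from (i) via the Gamma integral rather than recomputing from the intensities, an immaterial rearrangement). The computations check out, including the bookkeeping of the $n=0$ term and the Tonelli justification.
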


\begin{proof} The first assertion follows immediately from \eqref{intensL}, the construction of $\S_t$ in terms of $(\Z_n)_{n\geq 0}$, and the definition of Wright functions above. The second then derives readily from the first and the identity
$$\int_0^{\infty} \e^{-\theta x} x^{\beta-1}\dd x = \theta^{-\beta} \Gamma(\beta),$$
which concludes the proof.
\end{proof}

We next point at the following consequence of Proposition~\ref{P1}(i), which is obtained by 
specializing Wright's asymptotic expansion (Theorem 2 in Wright \cite{Wright} or Theorem 2.1.2 in \cite{GoLuMa}) of $\phi(\rho,\beta, z)$ as $z\to \infty$ to our setting.
\begin{corollary}\label{C1}
It holds that 
$$\E(\S_1([0,x]))\sim \frac{\exp\left( (\alpha+1)(x/\alpha)^{\alpha/(\alpha+1)}\right)}{\sqrt{2\pi (\alpha+1) \alpha^{1/(\alpha+1)}x^{\alpha/(\alpha+1)}}}\qquad \text{as }x\to \infty.$$
\end{corollary}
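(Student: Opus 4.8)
The plan is to reduce the statement to a purely analytic question about the Wright function and then quote Wright's asymptotic expansion. By Proposition~\ref{P1}(i) taken at $t=1$, we have $\E(\S_1([0,x]))=\phi(\alpha,1,x^{\alpha})$, so everything comes down to the behaviour of $\phi(\alpha,1,z)$ as its argument $z=x^{\alpha}\to\infty$; note that $x\to\infty$ indeed forces $z\to\infty$ since $\alpha>0$.

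First I would recall the leading term of Wright's expansion (Theorem 2 in \cite{Wright}, or Theorem 2.1.2 in \cite{GoLuMa}), valid for $\rho>0$ as $z\to+\infty$ along the real axis. This expansion is governed by a saddle point $k_{*}=\rho^{-\rho/(1+\rho)}z^{1/(1+\rho)}$ of the summand $z^{k}/(k!\,\Gamma(\rho k+\beta))$, and its leading term has the form
$$\phi(\rho,\beta,z)\sim \frac{(\rho k_{*})^{1/2-\beta}}{\sqrt{2\pi(1+\rho)}}\,\exp\bigl((1+\rho)k_{*}\bigr).$$

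Then I would substitute $\rho=\alpha$, $\beta=1$ and $z=x^{\alpha}$. The exponent becomes $(1+\alpha)k_{*}=(1+\alpha)\alpha^{-\alpha/(1+\alpha)}x^{\alpha/(1+\alpha)}=(\alpha+1)(x/\alpha)^{\alpha/(\alpha+1)}$, which matches the numerator of the stated estimate. For the prefactor, since $\beta=1$ we have $(\rho k_{*})^{1/2-\beta}=(\alpha k_{*})^{-1/2}$ with $\alpha k_{*}=\alpha^{1/(1+\alpha)}x^{\alpha/(1+\alpha)}$; combining this with the factor $(2\pi(1+\alpha))^{-1/2}$ collapses the two square roots into $\bigl(2\pi(\alpha+1)\alpha^{1/(1+\alpha)}x^{\alpha/(1+\alpha)}\bigr)^{-1/2}$, which is exactly the denominator claimed.

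The work here is essentially bookkeeping of exponents and constants, so I do not expect a serious obstacle. The only points requiring genuine care are to confirm that the cited expansion is applicable on the positive real axis (rather than only in a sector avoiding it) and that the subleading terms of Wright's expansion are indeed $o(1)$ relative to the leading term, so that the asymptotic equivalence $\sim$ is legitimate; both are immediate from the statements of the cited theorems once the parameters $\rho=\alpha>0$ and $\beta=1$ are seen to lie in the admissible range.
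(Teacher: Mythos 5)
Your proposal is correct and follows exactly the route the paper takes: Corollary~\ref{C1} is stated there as an immediate consequence of Proposition~\ref{P1}(i) with $t=1$ together with Wright's asymptotic expansion (Theorem 2 in \cite{Wright}, Theorem 2.1.2 in \cite{GoLuMa}), and your substitution $\rho=\alpha$, $\beta=1$, $z=x^{\alpha}$ with the saddle point $k_{*}=(x/\alpha)^{\alpha/(\alpha+1)}$ reproduces both the exponent and the prefactor correctly. Nothing further is needed.
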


The main result of this section is a weak limit theorem for $\S_t([0,x])$ properly normalized, which reinforces considerably Corollary~\ref{C1}. It relies on the following assumption on the measure $\Lambda$ (or, equivalently on $\lambda$): for some $p\in(1,2]$, we have
\begin{equation}\label{eqLLp}
 \int_{\calP} \left(\sum_{k\geq 1}\e^{-x_k}\right)^p \Lambda(\dd \x)=\int_0^{\infty} y^{\alpha-1} \int_{\calP} \left(\sum_{k\geq 1}\e^{-yx_k}\right)^p \lambda(\dd \x)\dd y<\infty.
\end{equation}

\begin{theorem}\label{T2} Assume \eqref{eqLLp} holds for some $p\in(1,2]$. Then as $x\to \infty$, there is the convergence in distribution:
$$
  x^{\frac{\alpha}{2(\alpha+1)}}\exp\left(-(\alpha+1)(x/\alpha)^{\alpha/(\alpha+1)}\right)\S_1([0 , x])
\Longrightarrow \bar W,
$$
where $\bar W$ is a positive random variable in $L^p(\P)$ with 
$$\E(\bar W)=\left(2\pi \alpha^{1/(1+\alpha)}(\alpha+1)\right)^{-1/2}.$$ 
\end{theorem}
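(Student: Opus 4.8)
The plan is to trade the large-$x$ asymptotics of the random counting function $\S_1([0,x])$ for the large-time behaviour of a single additive martingale, using self-similarity. Writing $Z_t(\theta):=\crochet{\S_t,\e_{-\theta}}$, the scaling property of Lemma~\ref{l:BM1}(ii) gives $Z_t(c\theta)\egaldistr Z_{c^{-\alpha}t}(\theta)$ and, evaluating the scaled measures on intervals, the more suggestive identity $\S_1([0,x])\egaldistr\S_{x^\alpha}([0,1])$. On the other hand, conditioning via the branching property of Lemma~\ref{l:BM1}(i) and using Proposition~\ref{P1}(ii), one checks that for each fixed $\theta>0$ the process
\[
 M_t(\theta):=\e^{-t\theta^{-\alpha}}Z_t(\theta)
\]
is a nonnegative c\`adl\`ag martingale in $t$ with $\E(M_t(\theta))=1$ under the normalization \eqref{eqn:normalization}; this is Biggins' additive martingale in the present setting. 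Combining the two observations yields $\e^{-\theta^{-\alpha}}Z_1(\theta)\egaldistr M_{\theta^{-\alpha}}(1)$, so that the small-$\theta$ behaviour of the random Laplace transform of $\S_1$ is dictated by the single martingale $M_t(1)$ as $t\to\infty$.

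The second step is to prove that $M_t(1)$ converges. Here the hypothesis \eqref{eqLLp} plays its role: it is exactly the $L^p$-moment condition on the reproduction measure $\Lambda$ required by Biggins' theorem (\cite{Biggins77}, in the branching L\'evy formulation of \cite{BeMa}) for $M_t(1)$ to converge as $t\to\infty$, almost surely and in $L^p(\P)$, to a limit $\widetilde W$. Since $p>1$ forces uniform integrability, the mean is preserved, $\E(\widetilde W)=1$, and $\widetilde W$ is strictly positive. Concretely, I would pass to a discrete-time skeleton $(\S_{k2^{-\ell}})_{k\ge0}$, which is an ordinary branching random walk (as in Lemma~\ref{lem:brwplan}), so that Biggins' discrete-time $L^p$ criterion applies verbatim; the restriction $p\le2$ is precisely what makes the moment bound \eqref{eqLLp} sufficient. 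The random variable in the statement is then $\bar W:=c_0\,\widetilde W$, where $c_0=(2\pi\alpha^{1/(1+\alpha)}(\alpha+1))^{-1/2}$ is the constant of Corollary~\ref{C1}; this already matches $\E(\bar W)=c_0$ and $\bar W\in L^p(\P)$.

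It remains to convert convergence of the Laplace transform into convergence of the counting function. The deterministic backbone is the identity $Z_1(\theta)=\theta\int_0^\infty\e^{-\theta x}\S_1([0,x])\,\dd x$ together with Corollary~\ref{C1}: the normalizing factor $x^{\alpha/(2(\alpha+1))}\exp(-(\alpha+1)(x/\alpha)^{\alpha/(\alpha+1)})$ is, up to the constant $c_0$, the reciprocal of $\E(\S_1([0,x]))=\phi(\alpha,1,x^\alpha)$, and Wright's saddle-point expansion (which underlies Corollary~\ref{C1}) fixes both the polynomial prefactor $x^{-\alpha/(2(\alpha+1))}$ and the Gaussian constant produced by the inversion. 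To carry the random multiplicative factor $\widetilde W$ through this saddle-point inversion, I would invoke Biggins' uniform convergence of additive martingales \cite{Biggins92}: it provides convergence of $\e^{-\theta^{-\alpha}}Z_1(\theta)$ to $\widetilde W$ that is strong enough (uniform in the tilt parameter as $\theta\to0$) to be valid on the shrinking neighbourhood of the saddle point $\theta(x)\to0$ that governs the inversion. Pulling $\widetilde W$ out of the Laplace integral and applying the deterministic inversion then gives the announced convergence in distribution towards $\bar W=c_0\widetilde W$.

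The main obstacle is exactly this transfer. Because the saddle point $\theta(x)$ tends to $0$ as $x\to\infty$, one needs $\e^{-\theta^{-\alpha}}Z_1(\theta)\to\widetilde W$ to hold with genuine uniformity on a vanishing neighbourhood of $\theta(x)$ \emph{at the fixed time $t=1$}, rather than merely in distribution for each fixed $\theta$ (the scaling identity controls only the one-dimensional marginals in $\theta$); coupling the control of the random martingale with the deterministic local (saddle-point, or local-CLT) analysis so that the fluctuating factor $\widetilde W$ survives while the sharp constant $c_0$ emerges from Corollary~\ref{C1} is the delicate point. A slightly softer alternative would be to establish tightness of the normalized count and to identify each subsequential limit through its Laplace transform, but this route meets the same uniform-control difficulty at its heart.
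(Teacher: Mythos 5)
Your architecture assembles the right ingredients (the additive martingale $W_t(\theta)=\e^{-t\theta^{-\alpha}}\crochet{\S_t,\e_{-\theta}}$, the scaling identity relating $\S_1$ at large $x$ to $\S_t$ at large $t$, Biggins' $L^p$ criterion under \eqref{eqLLp}, and the Wright asymptotics of Corollary~\ref{C1} for the normalization and the constant), and your preliminary steps are sound. But the step you yourself flag as ``the delicate point'' is a genuine gap, and the tool you propose to close it cannot work. You want $\e^{-\theta^{-\alpha}}Z_1(\theta)=W_1(\theta)$ to converge, uniformly on a shrinking neighbourhood of the saddle point $\theta(x)\to0$, to a single random variable $\widetilde W$. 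No such $\widetilde W$ exists: for each fixed $\theta$ the martingale $W_t(\theta)$ converges as $t\to\infty$ to a limit $W(\theta)$ that genuinely depends on $\theta$; the family $(W(\e^r))_{r\in\R}$ is stationary but not constant (Proposition~\ref{P2}(i) and Remark~\ref{r:rcl}), so $W(\theta)$ converges only in distribution, not almost surely, as $\theta\to0+$. Biggins' uniform convergence theorem \cite{Biggins92} gives uniformity of $W_t(\theta)\to W(\theta)$ only over compact subsets of $(0,\infty)$, which is of no help near the boundary $\theta=0$. Moreover, your scaling identity $W_1(\theta)\egaldistr W_{\theta^{-\alpha}}(1)$ controls only one-dimensional marginals in $\theta$, whereas a saddle-point inversion requires joint control of $\theta\mapsto W_1(\theta)$ near $\theta(x)$ at the fixed time $t=1$. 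This obstruction is exactly why the theorem asserts only convergence in distribution and why Remark~\ref{r:rcl} explains that it cannot be strengthened to almost sure convergence.

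The paper's proof sidesteps the boundary case $\theta=0$ altogether. It applies the local limit theorem of Corollary~\ref{C2} at the \emph{fixed} parameter $\theta=1$ with $f=\ind{-a\leq\cdot\leq0}$, obtaining the almost sure asymptotics of $\S_t([\alpha t-a,\alpha t])$ as $t\to\infty$; scaling converts this into a weak limit for $\S_1$ on the windows $[\alpha t^{1+1/\alpha}-at^{1/\alpha},\,\alpha t^{1+1/\alpha}]$, and letting $a\to\infty$ yields only the one-sided bound $\liminf_{t\to\infty}\P(\cdots>y)\geq\P(W>y)$. This is then upgraded to genuine convergence in distribution by matching first moments: Corollary~\ref{C1} shows that the normalized expectation tends to $1=\E(W)$, and a Scheff\'e-type argument forces equality of the distribution functions in the limit. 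To complete your write-up you should either adopt this local-limit-plus-Scheff\'e route, or supply a new uniform control of the martingale near $\theta=0$ --- and the latter, in the almost sure sense, would contradict Remark~\ref{r:rcl}.
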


The scaling property enables us of course to rephrase Corollary~\ref{C1} in the form 
$$\E(\S_t([0,1]))\sim \frac{\exp\left( (\alpha+1)(t/\alpha^{\alpha})^{1/(\alpha+1)}\right)}{\sqrt{2\pi (\alpha+1) (\alpha t)^{1/(\alpha+1)}}}\qquad \text{as }t\to \infty,$$
and, in turn, provided that \eqref{eqLLp} holds, Theorem~\ref{T2} as
\begin{equation}\label{eqT2bis}
  t^{\frac{1}{2(\alpha+1)}}\exp\left(-(\alpha+1) (t/\alpha^{\alpha})^{1/(\alpha+1)}\right)\S_t([0 , 1])
\Longrightarrow \bar W\qquad \text{as } t\to \infty.
\end{equation}
Still assuming \eqref{eqLLp}, it is interesting to point out that, for every $a>0$, there is also the {\em strong} (i.e. almost sure) convergence
\begin{equation}\label{eqT2ter}
  \lim_{t\to \infty}\sqrt t 
  \exp\left(-(\alpha+1)(a/\alpha)^{\alpha/(\alpha+1)}t\right)\S_t([0 , at])
= a^{-\frac{\alpha}{2(\alpha+1)}} \bar W(a),
\end{equation}
where $\bar W(a)$ is a random variable with the same distribution as $\bar W$; see the forthcoming Corollary~\ref{C2} for an (essentially) equivalent result. Formally, one recovers \eqref{eqT2bis} from \eqref{eqT2ter} by taking $a=1/t$. We stress however that the convergence in \eqref{eqT2ter} holds in the strong sense, whereas those in Theorem~\ref{T2} and \eqref{eqT2bis} hold only in the weak sense; see Remark~\ref{r:rcl} below.

The proof of Theorem~\ref{T2} relies crucially on results due to Biggins \cite{Biggins77,Biggins92}. We observe from the branching property and Proposition~\ref{P1}(ii), that for every $\theta>0$, the process
$$W_t(\theta)\coloneqq \exp(-t\theta^{-\alpha})\int_{[0,\infty)}\e^{-\theta x}\S_t(\dd x)\,,\quad t\geq 0$$
is a positive martingale, with terminal values
$W(\theta)\coloneqq \lim_{t\to \infty} W_t(\theta).$
\begin{proposition}\label{P2}
 \begin{itemize}
\item[(i)] The process $(W(\e^{r}))_{r\in \R}$ is stationary.

\item[(ii)]  If \eqref{eqLLp} holds for some $p\in(1,2]$, then for every $\theta >0$, the martingale $(W_t(\theta))_{t\geq 0}$ is bounded in $L^p(\P)$ with terminal value $W(\theta)>0$ a.s.
\end{itemize}
\end{proposition}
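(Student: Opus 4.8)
\emph{Part (i).} The plan is to read stationarity off the self-similarity of $(\S_t)_{t\geq 0}$, applied at the level of processes. Each $W_\cdot(\theta)$ is a non-negative martingale, hence converges almost surely to $W(\theta)$. Fix $c>0$ and $\theta_1,\dots,\theta_m>0$. Since dilating every atom by $c$ turns $\crochet{\cdot,\e_{-\theta_j}}$ into $\crochet{\cdot,\e_{-c\theta_j}}$ and $(c\theta_j)^{-\alpha}=c^{-\alpha}\theta_j^{-\alpha}$, the definition of the martingale gives
$$W_t(c\theta_j)=\exp\!\bigl(-(c^{-\alpha}t)\theta_j^{-\alpha}\bigr)\,\crochet{c\S_t,\e_{-\theta_j}},\qquad 1\leq j\leq m.$$
The scaling property of Lemma~\ref{l:BM1}(ii) asserts $(c\S_t)_{t\geq 0}\egaldistr(\S_{c^{-\alpha}t})_{t\geq 0}$ as $\calP$-valued processes, and applying the functionals $\crochet{\cdot,\e_{-\theta_j}}$ simultaneously yields the joint identity $(W_t(c\theta_j))_{t\geq 0,\,j}\egaldistr(W_{c^{-\alpha}t}(\theta_j))_{t\geq 0,\,j}$. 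Letting $t\to\infty$, so that $c^{-\alpha}t\to\infty$ as well, and using the almost sure convergence of each martingale, I obtain $(W(c\theta_j))_{j}\egaldistr(W(\theta_j))_{j}$. Taking $\theta_j=\e^{r_j}$ and $c=\e^{h}$ is exactly the statement that the finite-dimensional laws of $(W(\e^{r}))_{r\in\R}$ are invariant under the shift $r\mapsto r+h$, i.e. stationarity.

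\emph{Part (ii).} First I would reduce to $\theta=1$: the same process-level scaling identity gives $(W_t(\theta))_{t\geq 0}\egaldistr(W_{\theta^{-\alpha}t}(1))_{t\geq 0}$, so it suffices to prove $\sup_t\E(W_t(1)^p)<\infty$ and $W(1)>0$ a.s. Next I would view $(W_n(1))_{n\in\N}$ as Biggins' additive martingale for the discrete-time branching random walk $(\S_n)_{n\in\N}$, whose reproduction law is $\S_1$ and whose step Laplace exponent is, by Proposition~\ref{P1}(ii), $\kappa(\theta)=\log\E(\crochet{\S_1,\e_{-\theta}})=\theta^{-\alpha}$. Two structural conditions of \cite{Biggins77,Biggins92} then have to be checked, and both hold for every $\theta>0$ here: the derivative (non-degeneracy) condition $\theta\kappa'(\theta)-\kappa(\theta)=-(\alpha+1)\theta^{-\alpha}<0$, and the spectral condition $\kappa(p\theta)<p\kappa(\theta)$, which reduces to $p^{-\alpha}<p$ and is automatic for $p>1$.

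The remaining hypothesis, and the main obstacle, is the moment bound $\E(W_1(1)^p)=\E\bigl((\crochet{\S_1,\e_{-1}})^p\bigr)<\infty$, which is precisely where assumption~\eqref{eqLLp} must be used. The difficulty is that $\crochet{\S_1,\e_{-1}}$ encodes the entire population at time $1$ rather than a single reproduction event. I would control it through the branching Lévy structure of $(\S_t)_{t\geq 0}$: setting $f(t)=\E(W_t(1)^p)$ and combining the branching property with a von Bahr--Esseen / Topchii--Vatutin inequality valid for $p\in(1,2]$, the contribution of one reproduction event to the $p$-th moment is exactly the finite integral in \eqref{eqLLp}, while the spectral condition makes the resulting linear integral inequality for $f$ dissipative; a Gronwall estimate then gives $\sup_t f(t)<\infty$, in particular finiteness at $t=1$. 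With $L^p$-boundedness established, Biggins' theorem yields $L^p$-convergence $W_t(1)\to W(1)$, whence $\E(W(1))=\E(W_0(1))=1$ and $W(1)$ is non-degenerate. Finally $W(1)>0$ a.s. follows from the fixed-point equation satisfied by $W(1)$ under the branching property: the event $\{W(1)=0\}$ has probability $0$ or $1$, and $\E(W(1))=1$ rules out the value $1$. Undoing the scaling transfers $L^p$-boundedness and strict positivity to every $\theta>0$.
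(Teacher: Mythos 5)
Your part (i) is essentially the paper's argument: both read the stationarity of $(W(\e^r))_{r\in\R}$ off the scaling identity $(c\S_t)_{t\ge0}\egaldistr(\S_{c^{-\alpha}t})_{t\ge0}$ applied jointly to finitely many values of $\theta$, and then pass to the almost sure limits of the martingales. For part (ii) you take a genuinely different route. The paper reflects the process, identifies $(-\S_t)_{t\ge0}$ as a branching L\'evy process with cumulant $\kappa(\theta)=\theta^{-\alpha}$, checks the spectral conditions $\kappa(p\theta)<p\kappa(\theta)$ and $\kappa(q\theta)<\infty$, and then simply invokes Proposition~1.4 of \cite{BeMa2}; the only real work is verifying that the $L^p$ integrability condition on the L\'evy measure required there is equivalent to \eqref{eqLLp}. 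You instead propose to re-derive that $L^p$ bound from scratch: reduce to $\theta=1$ by scaling, and control $f(t)=\E(W_t(1)^p)$ via a von Bahr--Esseen/Topchii--Vatutin inequality plus Gronwall. This is indeed the proof strategy behind the cited result, and your verification of the spectral and derivative conditions is correct, as is the zero--one/fixed-point argument for $W(\theta)>0$ a.s. (here the population is infinite and never extinct, so $\E(W(\theta))=1$ forces $\P(W(\theta)=0)=0$). What your route buys is self-containedness; what it costs is that the one step you leave as a plan is precisely the hard one: the reproduction intensity $\Lambda^*$ has infinite total mass, so ``the contribution of one reproduction event'' is not literally summable and the decomposition of $W_{t+s}(1)-W_t(1)$ into centred contributions of birth events needs the compensation/truncation machinery of \cite{BeMa2} before von Bahr--Esseen applies. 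As written this is a gap in your argument for the key estimate $\sup_t\E(W_t(1)^p)<\infty$; either carry out that compensation argument in detail or, as the paper does, quote \cite{BeMa2} and reduce the hypothesis there to \eqref{eqLLp}.
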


\begin{proof} (i) Recall from our notation for dilations that for every point measure $\x\in\calP$ and $a>0$, there are the identities
$$\crochet{a\x, g}=\int_{[0,\infty)}g(y) (a\x)(\dd y)=\sum_{k\geq 1} g(ax_k)=
\int_{[0,\infty)}g(ay) \x(\dd y).$$
The scaling property of Theorem~\ref{T1}(i) implies that for every $c>0$, the processes
$\left(c\theta \S_{t}: t\geq0, \theta>0\right)$ and $\left(\theta \S_{c^{-\alpha}t}:t\geq0, \theta>0\right)$ have the same distribution. Hence the limits
$$ \lim_{t\to \infty} \e^{-t (c \theta)^{-\alpha}} \int_{[0,\infty}\e^{-c\theta x}\S_t(\dd x)= W(c\theta)$$
and 
$$
\lim_{t\to \infty} \e^{-t c^{-\alpha} \theta^{-\alpha}} \int_{[0,\infty}\e^{-\theta x}\S_{c^{-\alpha}t}(\dd x)= W(\theta)$$
have the same law, simultaneously for all $\theta>0$. 

(ii) Essentially, this follows from the version of Biggins' martingale convergence theorem for branching L\'evy processes
which has been recently established in \cite{BeMa2}. To fit the setting of \cite{BeMa2}, we rather consider the reflected process $(-\S_t)_{t\geq 0}$. 

Recall from the discussion after the proof of 
Theorem~\ref{T1}(i) that $(-\S_t)_{t\geq 0}$ is a branching L\'evy process with L\'evy measure $\Lambda$. Further Proposition~\ref{P1}(ii) identifies the so-called cumulant as $\kappa(\theta)=\theta^{-\alpha}$ (this can also be checked directly from the L\'evy-Khintchine type formula (5.4) in \cite{BeMa}). The conditions $ \kappa(p\theta)<p \kappa(\theta)$ 
and $\kappa(q\theta)<\infty$ for some $q>p$ obviously hold for all $\theta >0$ and $p>1$. We then deduce from Proposition 1.4 in \cite{BeMa2} that the martingale $(W_t(\theta))_{t\geq 0}$ is bounded in $L^p(\P)$ whenever 
$$ \int_{\cal P} \crochet {\tilde \x, \e^{\theta\cdot}}^p\ind{\crochet {\tilde \x, \e^{\theta\cdot}}>2} \Lambda(\dd \x)<\infty,$$
with the notation $\tilde \x=(0,-x_1, -x_2, \ldots)$,
that is 
$$ \int_{\cal P} \left( \crochet {\x, \e^{-\theta\cdot}}+1\right)^p \ind{ \crochet {\x, \e^{-\theta\cdot}}>1} \Lambda(\dd \x) <\infty.$$
It is easily seen that this is equivalent to \eqref{eqLLp}.
\end{proof}
\begin{remark}
Using the branching property of the branching Lévy process, one can obtain an identity in distribution that characterizes the law of the terminal value $W(\theta)$. This equation is often called the smoothing transform (see \cite{Liu98,BiK05,ABM12}). The process $(W(\e^r), r \in \R)$ therefore as multiple features of interest, that justify further investigations.
\end{remark} 

We can now lift from Corollary 4 of Biggins \cite{Biggins92} the following local limit theorem.
\begin{corollary}\label{C2} Assume \eqref{eqLLp} for some $p\in(1,2]$, and let $f:\R\to \R$ be a directly Riemann integrable function with compact support. Then 
\begin{eqnarray*}
& &\lim_{t\to \infty} \sqrt t \exp\left(-(\alpha+1)\theta^{-\alpha}t\right)\int_{\R_+}f(y-\alpha \theta^{-\alpha-1}t)\S_t(\dd y)\\
&=& \frac{W(\theta)}{\sqrt{2\pi \alpha(\alpha+1)\theta^{-\alpha-2}}} \int_{\R} f(y) \e^{\theta y}\dd y,\end{eqnarray*}
where the limit is uniform for $\theta$ in compact subsets of $(0,\infty)$, almost surely. 
\end{corollary}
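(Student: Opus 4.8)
The plan is to read Corollary~\ref{C2} off Biggins' local limit theorem (Corollary~4 in \cite{Biggins92}), applied not to the reproduction-indexed walk $(\Z_n)_{n\geq 0}$ but to the discrete-time \emph{skeleton} $(\S_k)_{k\in\N}$ of the branching-stable process sampled at integer times. By the branching property of Lemma~\ref{l:BM1}(i) this skeleton is a branching random walk on $\R_+$ whose reproduction law is $\S_1$ and whose generation index now plays the role of (discrete) time; crucially, it is \emph{time} rather than the number of reproduction events that indexes the generations, which is what makes the constants come out directly. Its log-Laplace transform is, by Proposition~\ref{P1}(ii), exactly $\kappa(\theta)=\log\E\bigl(\crochet{\S_1,\e_{-\theta}}\bigr)=\theta^{-\alpha}$, and the associated additive martingale $\e^{-k\kappa(\theta)}\crochet{\S_k,\e_{-\theta}}$ is precisely the sampling at integer times of the martingale $(W_t(\theta))_{t\geq 0}$ of Proposition~\ref{P2}. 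Hence its terminal value is the very variable $W(\theta)$ produced there, which lets Biggins' random limit be identified with $W(\theta)$ with no extra work.

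First I would match the three constants against the tilt $\theta$. The $\theta$-tilted skeleton walk advances per generation by $-\kappa'(\theta)=\alpha\theta^{-\alpha-1}$ on average, so after $k$ generations it sits near $\alpha\theta^{-\alpha-1}k$; taking $k=t$ this is exactly the centering $\alpha\theta^{-\alpha-1}t$. Its per-generation variance is $\kappa''(\theta)=\alpha(\alpha+1)\theta^{-\alpha-2}$, producing the Gaussian normalisation $(2\pi k\kappa''(\theta))^{-1/2}$ and, after multiplication by $\sqrt t=\sqrt k$, the denominator $\sqrt{2\pi\alpha(\alpha+1)\theta^{-\alpha-2}}$. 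The exponential rate is $\kappa(\theta)+\theta\bigl(-\kappa'(\theta)\bigr)=\theta^{-\alpha}+\alpha\theta^{-\alpha}=(\alpha+1)\theta^{-\alpha}$, giving the prefactor $\exp(-(\alpha+1)\theta^{-\alpha}t)$. Since the spatial law of $\S_1$ is non-lattice (its intensity has a density by Proposition~\ref{P1}(i), and it charges an interval), Biggins' non-lattice local limit theorem applies, and for directly Riemann integrable $f$ of compact support it delivers exactly Corollary~\ref{C2} along integer times, uniformly for $\theta$ in compacts of $(0,\infty)$ and almost surely.

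Next I would verify the hypotheses of Biggins' theorem in a neighbourhood of each $\theta$. These are the same integrability requirements already checked in the proof of Proposition~\ref{P2}(ii): assumption \eqref{eqLLp} guarantees $L^p$-boundedness of the additive martingale for some $p\in(1,2]$, while the smoothness and strict convexity of $\theta\mapsto\kappa(\theta)=\theta^{-\alpha}$ supply the remaining finiteness conditions underlying the \emph{uniform} martingale convergence that is the main theorem of \cite{Biggins92}. Direct Riemann integrability and compact support of $f$ are precisely the regularity the local limit theorem asks of its test function.

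The main obstacle is the passage from integer times to all real $t$ with the uniformity in $\theta$ preserved. I would run the same argument on the finer skeletons $(\S_{k2^{-\ell}})_{k\in\N}$ for each $\ell$, obtaining the statement simultaneously along all dyadic times — the limit being the same $W(\theta)$ for every mesh, as it is the terminal value of the single continuous-time martingale of Proposition~\ref{P2} — and then fill in the remaining times by interpolation. Over a mesh interval the centering $\alpha\theta^{-\alpha-1}t$ moves only by $O(2^{-\ell})$, an $O(1)$-scale shift against the $\sqrt t$ spread, absorbed using the direct Riemann integrability of $f$; and the atoms created between consecutive mesh times contribute negligibly to the moving window, as a first-moment bound via Proposition~\ref{P1}(i) shows. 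Letting $\ell\to\infty$ kills both errors, and propagating the uniformity of Biggins' convergence through this limit is the delicate point. An alternative is to invoke the scaling identity of Theorem~\ref{T1}(i), which converts $t\to\infty$ into a spatial-scale limit at fixed time and links Corollary~\ref{C2} to \eqref{eqT2ter} and Theorem~\ref{T2}; but a continuum of parameters must still be bridged, so the interpolation above remains the crux.
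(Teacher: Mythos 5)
Your proposal takes essentially the same route as the paper: the paper offers no argument beyond ``lifting'' the statement from Corollary~4 of Biggins \cite{Biggins92}, with the hypotheses supplied implicitly by the cumulant $\kappa(\theta)=\theta^{-\alpha}$ of Proposition~\ref{P1}(ii) and the $L^p$-boundedness of Proposition~\ref{P2}(ii) under \eqref{eqLLp}, which is exactly what you do. Your identification of the constants ($-\kappa'(\theta)=\alpha\theta^{-\alpha-1}$, $\kappa''(\theta)=\alpha(\alpha+1)\theta^{-\alpha-2}$, rate $\kappa(\theta)-\theta\kappa'(\theta)=(\alpha+1)\theta^{-\alpha}$) is correct, and the discrete-to-continuous-time interpolation you rightly flag as the delicate point (manageable here since $t\mapsto\S_t$ is monotone, individuals being eternal and static) is simply glossed over by the paper's direct citation.
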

Corollary~\ref{C2} describes the precise asymptotic behaviour of the point measure $\S_t$ in the neighbourhood of $-\kappa'(\theta )t$ for any $0<\theta<\infty$, where $\kappa'(\theta )=-\alpha \theta^{-\alpha-1}$, and in this regime, the limit involves the terminal value $W(\theta)$ of the additive martingale as an asymptotic weight. Roughly speaking, recalling from Proposition~\ref{P1} that the distribution of $W(\theta)$ does not depend on $\theta$ and using the scaling property of $(\S_t)_{t\geq0}$, yields the proof of Theorem~\ref{T2} by 
establishing that a similar behaviour holds for the boundary case $\theta = 0$, but with weak convergence instead of strong convergence.

\begin{proof}[Proof of Theorem~\ref{T2}] Specialising Corollary~\ref{C2} for $\theta=1$ and the directly Riemann integrable $f(x)=\ind{-a\leq x\leq0}$ for some given $a>0$, we get
$$
\lim_{t\to \infty} {\sqrt{2\pi t \alpha(\alpha+1)}} \e^{-(\alpha+1)t}\S_t([\alpha t -a, \alpha t])
= (1-\e^{-a}) W(1)\quad \text{a.s.}
$$
By the scaling property, this translates into the weak convergence as $t\to \infty$
$$
 {\sqrt{2\pi t \alpha(\alpha+1)}} \e^{-(\alpha+1)t}\S_1([\alpha t^{1+1/\alpha} -at^{1/\alpha} , \alpha t^{1+1/\alpha} ])
\Longrightarrow (1-\e^{-a}) W,
$$
where $W$ denotes a random variable distributed as $W(1)$. 
Since $a$ can be picked arbitrarily large, we deduce that for all $y>0$, one has
$$\liminf_{t\to \infty} \P\left({\sqrt{2\pi t \alpha(\alpha+1)}} \e^{-(\alpha+1)t}\S_1([0 , \alpha t^{1+1/\alpha} ]) >y \right)\geq \P\left(W >y\right).$$

On the one hand, 
\begin{eqnarray*} 
&&\int_0^{\infty} \P\left({\sqrt{2\pi t \alpha(\alpha+1)}} \e^{-(\alpha+1)t}\S_1([0 , \alpha t^{1+1/\alpha} ]) >y \right)\dd y \\
&=& 
 {\sqrt{2\pi t \alpha(\alpha+1)}}\e^{-(\alpha+1)t}\E\left(\S_1([0,\alpha t^{1+1/\alpha} ])\right),
\end{eqnarray*}
and we see from Corollary~\ref{C1} that this quantity converges to $1$ as $t\to \infty$.
On the other hand, recall from Proposition~\ref{P2} that 
$$\int_0^{\infty} \P\left(W >y\right) \dd y = \E(W) = 1.$$
We deduce from a version of Scheff\'e's lemma that actually
$$\lim_{t\to \infty} \P\left({\sqrt{2\pi t \alpha(\alpha+1)}} \e^{-(\alpha+1)t}\S_1([0 , \alpha t^{1+1/\alpha} ]) >y \right)\\
= \P\left(W >y\right) \ \hbox{in } L^1(\dd y).
$$
Setting $x=\alpha t^{1+1/\alpha}$ and 
$\bar W = {W}/{\sqrt{2\pi (\alpha+1)\alpha^{1/(\alpha+1)}}},$
yields our claim. 
\end{proof}

\begin{remark}\label{r:rcl}
Note from the stationarity of $(W(\e^r), r \in \R)$, that one has plainly the weak convergence 
\[
 W(\theta) \Longrightarrow W \quad \text{both as $\theta \to 0+$ and $\theta\to \infty$.}
\]
To see that the convergence above cannot hold in the almost sure sense, observe from stationarity that for any $r\in\R$, the pair $(W(1),W(2))$ has the same distribution as $(W(\e^r),W(2\e^r))$. Thus, if $W(\e^r)$ converged almost surely as $r\to \infty$, say towards $W(\infty)$, then so would $W(2\e^r)$, which would force $W(1)=W(2)$ a.s. This is absurd, because 
obviously for every $t>0$, $W_t(1)\neq W_t(2)$ with positive probability, so the terminal values of these two uniformly integrable martingales cannot coincide a.s. This explains why the convergence in Theorem~\ref{T2} cannot be strengthened to a.s. convergence either.
\end{remark}

\section{On the genealogy of atoms}
\label{sec:generation}

The aim of this section is to investigate the genealogy of atoms of a branching-stable point measure with negative scaling exponent, viewed as individual in a CMJ  branching process. We first describe the ancestral lineage of a typical particle at generation $n$ (often referred to as the \emph{spine}), and then turn our attention to asymptotic results concerning the minimal position of such particles.  

Recall first that $\S_1$ is constructed as the superposition of the projections on the $x$-axis of the point measures $\ind{t\leq 1}\Z_n(\dd t, \dd x)$ for $n\geq 0$. In this direction, if $({t},x)$ is an atom in $\Z_n$ with ${t}\leq 1$, we call ${t}$ the birth-time of the atom $x$ in $\S_1$ and $n$ the generation of $x$.
 In this case, we further denote by
\[
0 < t_1 < \ldots < t_n : = {t},
\quad
\text{and}
\quad
0 < x_1 < \ldots < x_n : = x,
\] 
the birth-times and, respectively, the positions of the ancestors of $x$. To shorten the notation we shall often use an upper-bar, {\it e.g.} $\bar{t}$ and $\bar{x}$, to denote the corresponding $n$-dimensional vectors. 

The first result in this Section is a many-to-one type formula which describes the distribution of $(\bar{t}, \bar{x})$ for ``typical'' atoms at generation $n$. In this direction, let $(S_n)_{n\geq 0}$ denote a random walk on $\R_+$ whose increments have the
gamma distribution with parameter $(\alpha, 1)$, that is with density $\Gamma(\alpha)^{-1} x^{\alpha-1}\e^{-x}$ for $x>0$. Let also $(U_i)_{i\geq 1}$ be i.i.d. copies of a uniform random variable on $[0,1]$, and set
$A_n := U_1 \cdots U_n$ for $n\geq 1$, where the notation $A$ refers to age.

\begin{proposition}
\label{many-to-one} For every $n\geq 1$ and  $f : \big( [0,1] \times \R_+ \big)^n \to \R_+ $ measurable, we have:
\begin{equation}\label{eq:many-to-one}
\E \left( \int_{[0,1]\times \R_+} f \big( (1-t_i, x_i)_{i\leq n}  \big) \Z_n( \dd t,\dd x) \right)
= \E \Big(  f \big( (A_i, S_i )_{i\leq n}  \big) \e^{ S_n}  \textstyle \prod\limits_{i=1}^{n-1} A_i \Big).
\end{equation}
\end{proposition}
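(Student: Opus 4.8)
The plan is to compute the left-hand side directly using the intensity-measure formula for the branching random walk $(\Z_n)_{n\geq 0}$ from Lemma~\ref{L1}, and then to recognise the resulting integral as the expectation on the right-hand side. The key observation is that the full ancestral data $(\bar t, \bar x) = (t_1,\dots,t_n; x_1,\dots,x_n)$ of a typical atom at generation $n$ is governed by the $n$-fold convolution structure of the reproduction intensity $\mu_1$. Concretely, I would first establish a refined many-to-one formula at the level of ancestral lineages: for a measurable $g$ on $\big((0,\infty)^2\big)^n$,
\begin{equation}\label{eq:plan-refined}
\E\!\left( \sum_{(\bar t,\bar x)} g(\bar t,\bar x) \right)
= \int g\big((t_i,x_i)_{i\le n}\big)\,\prod_{i=1}^n \mu_1\big(\dd(t_i-t_{i-1}),\dd(x_i-x_{i-1})\big),
\end{equation}
where the sum ranges over all atoms at generation $n$ together with their ancestral chains, and $t_0=x_0=0$. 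This is just the statement that the law of the spine in a branching random walk, weighted by the number of particles, reproduces i.i.d. increments distributed according to the (normalised) reproduction intensity; under the normalisation \eqref{eqn:normalization} we have $\mu_1(\dd t,\dd x) = x^{\alpha-1}\,\dd t\,\dd x$, so the increments $(t_i-t_{i-1}, x_i-x_{i-1})$ are independent with this density.

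Next I would carry out the change of variables that converts \eqref{eq:plan-refined} into the claimed probabilistic form. The spatial increments $x_i-x_{i-1}$ each carry the improper density $x^{\alpha-1}$, which is exactly $\Gamma(\alpha)$ times the gamma$(\alpha,1)$ density after inserting a compensating factor $\e^{-(x_i-x_{i-1})}$. Telescoping these exponentials produces the global weight $\e^{-x_n}$, which together with the constraint-free integration over the $x$-increments explains the factor $\e^{S_n}$ appearing on the right (after substituting $x_i \leftrightarrow S_i$ and absorbing the $\Gamma(\alpha)$'s, whose product is cancelled by the normalisation constant $c(\lambda)^n\Gamma(\alpha)^n/\Gamma(\alpha n)(n-1)!$ inherent in $\mu_n$). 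For the temporal variables, I integrate over the simplex $0<t_1<\dots<t_n\le 1$; writing $A_i = t_i/t_{i+1}$ (with $t_{n+1}:=1$) sends the ordered times into independent uniform ratios, and the Jacobian of this transformation is precisely $\prod_{i=1}^{n-1} t_i = \prod_{i=1}^{n-1} A_i$ times lower-order factors, matching the $\prod_{i=1}^{n-1}A_i$ weight on the right. Under this substitution $1-t_i$ becomes $A_i$ in the test function, consistent with the argument $(1-t_i,x_i)$ on the left.

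The main obstacle, and the step deserving the most care, is verifying the refined many-to-one identity \eqref{eq:plan-refined} at the level of \emph{ancestral paths} rather than merely terminal positions, and matching all normalising constants so that the combinatorial prefactor in $\mu_n$ (Lemma~\ref{L1}) is exactly consumed by the change of variables. In principle this follows from iterating the branching property and the Poissonian structure of $\Z_1$, using the restriction theorem to disintegrate $\Z_n$ into a first-generation atom times an independent copy of $\Z_{n-1}$; an induction on $n$ then yields the product-of-increments intensity. I would therefore organise the proof as an induction on $n$, checking the base case $n=1$ against \eqref{e:intens1} (where the formula reads $\E(\int f(1-t,x)\Z_1(\dd t,\dd x)) = \E(f(A_1,S_1)\e^{S_1})$, easily verified since $A_1$ is uniform on $[0,1]$ and $S_1$ is gamma$(\alpha,1)$), and then propagating it through the convolution. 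The delicate bookkeeping lies in confirming that the uniform-ratio representation of the ordered simplex, with its Jacobian $\prod_{i<n}A_i$, dovetails exactly with the factor $t^{n-1}/(n-1)!$ in the intensity rather than leaving a spurious constant.
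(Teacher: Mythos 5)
Your route is correct in substance, but it is organized differently from the paper's and contains one indexing slip that needs repair. The paper proves \eqref{eq:many-to-one} by a direct induction on $n$: conditioning on generation $n$, it evaluates the conditional contribution of generation $n+1$ by Campbell's formula applied to $\Z_1$, and introduces one uniform and one gamma variable \emph{per step} through the local rescaling $u=s/(1-t_n)$. You instead first isolate a path-level many-to-one identity — the genealogical point process of generation-$n$ atoms together with their ancestral chains has intensity $\prod_{i=1}^n \mu_1(\dd(t_i-t_{i-1}),\dd(x_i-x_{i-1}))$, itself proved by the same induction-plus-branching-property mechanism — and then perform a single global change of variables: an exponential tilt in space (under \eqref{eqn:normalization} each increment density $x^{\alpha-1}/\Gamma(\alpha)$ equals the gamma$(\alpha,1)$ density times $\e^{x}$, and these factors telescope to $\e^{S_n}$) and a simplex-to-uniform-ratios substitution in time. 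This separation is clean and has the advantage of bypassing Lemma~\ref{L1} altogether: since your refined formula retains the intermediate times, there is no prefactor $t^{n-1}/(n-1)!$ to ``consume'', so that worry is moot. The needed correction concerns the time substitution: to match the argument $(1-t_i,x_i)$ of $f$ with $(A_i,S_i)$ you must take $U_i=(1-t_i)/(1-t_{i-1})$ with $t_0=0$, so that $A_i=U_1\cdots U_i=1-t_i$; your formula $A_i=t_i/t_{i+1}$ is neither a product of the $U_j$ nor equal to $1-t_i$. With the correct parametrization, the map $(u_1,\dots,u_n)\mapsto(1-t_1,\dots,1-t_n)$ is triangular with diagonal entries $A_{i-1}$, so its Jacobian is exactly $\prod_{i=1}^{n-1}(1-t_i)=\prod_{i=1}^{n-1}A_i$ — not $\prod_{i=1}^{n-1}t_i$, and with no ``lower-order factors''. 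Once this is fixed, the identity $\int_{0<t_1<\cdots<t_n\leq 1}h(1-t_1,\dots,1-t_n)\,\dd t_1\cdots\dd t_n=\E\big(h(A_1,\dots,A_n)\prod_{i=1}^{n-1}A_i\big)$ combines with the spatial tilt to give \eqref{eq:many-to-one}, and your base case $n=1$ agrees with \eqref{e:intens1} as you note.
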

\begin{remark} As a check, a direct application of Proposition~\ref{many-to-one} with $f (\bar t, \bar x) =\ind{x_n \leq a}$ gives 
\begin{align*}
\E \left( \Z_n \big( [0,1]\times [0,a] \big) \right) & = \E \left( \int_{[0,1]\times \R_+} \ind{x_n \leq a} \Z_n(\dd {t}, \dd x)\right) \\
& = \E \Big(\ind{S_n \leq a} \e^{ S_n} \textstyle \prod\limits_{i =1}^{n-1} A_i \Big) \\
& = \frac{1}{(n-1)!} \int_{0}^{a} \e^{ y} \frac{y^{\alpha n-1} \e^{- y}}{\Gamma(\alpha n)} \dd y\\
& = \frac{a^{\alpha n}}{n! \Gamma \big(\alpha n+1 \big)}, 
\end{align*}
which is in agreement with Lemma~\ref{L1}. 
\end{remark}

\begin{proof} The proof is done by induction. The case $n=1$ merely follows from \eqref{e:intens1}. 
Next, assume that the formula has been established for some $n\geq 1$, consider a measurable function $f : \big( [0,1] \times \R_+ \big)^{n+1} \to \R_+ $ and define for every $0< t_1<\ldots < t_n<1$ and $0<x_1< \ldots < x_n$
\begin{eqnarray*}& &F \big( (1-t_i, x_i)_{i\leq n}  \big) \\
&=& \E\left( \int_{[0,1-t_n]\times \R_+} \!\!\!\!\!\!\!\! \!\!\!\!\!\!\!\! \!\!\!\!\!\!\!\! f \big( (1-t_1,x_1), \ldots, (1-t_n, x_n), (1-t_n-t, x_n+y) \big) \Z_{1}(\dd t, \dd y)\right).
\end{eqnarray*}
The branching property and (conditional) Fubini's theorem show that
\begin{equation}\label{eq_123212321}
\begin{split}
& \E \left( \int_{[0,1]\times \R_+} f \big( (1-t_j, x_j)_{j\leq n+1}  \big) \Z_{n+1}(\dd t, \dd x) \right) \\
& = \E \left( \int_{[0,1]\times \R_+} F \big( (1-t_i, x_i)_{i\leq n}  \big)  \Z_{n}(\dd t, \dd x) \right)\\
&= \E \left( F \big( (A_i, S_i)_{i\leq n} \big) \e^{ S_n} \textstyle \prod\limits_{i=1}^{n-1} A_i  \right),
 \end{split}
\end{equation}
where the last equality stems from the induction assumption.

By Lemma~\ref{L1} and Campbell's formula, we see that $F \big( (1-t_i, x_i)_{i\leq n}  \big)$ is equal to 
\[
\int_{0}^{1-t_n} \int_0^{\infty} f \big( (1-t_1,x_1), \ldots, (1-t_n, x_n), (1-t_n-s, x_n+y) \big) y^{-\alpha-1} c(\lambda) \dd s \dd y.
\]
Next, we use the change of variables $u = s/(1-t_n)$ and the normalization assumption \eqref{eqn:normalization} to rewrite the above as
\[
(1-t_n) \int_{0}^{1} \int_0^{\infty} f \big( (1-t_1,x_1), \ldots, \big( (1-t_n)(1-u), x_n+y) \big) \e^{ y } \frac{y^{-\alpha-1} \e^{- y } }{\Gamma(\alpha)}  \dd u \dd y. 
\]
Let $U_{n+1}$ be a uniform random variable and $\gamma_{n+1}$ an independent gamma$(\alpha,1)$ random variable, so the above, and hence $F \big( (1-t_i, x_i)_{i\leq n}  \big) $, reads
\begin{equation}\label{eq_11223344556}
 (1-t_n) \E \left( f \big( (1-t_1,x_1), \ldots, \big( (1-t_n)U_{n+1}, x_n+\gamma_{n+1}) \big) \e^{ \gamma_{n+1} } \right).
\end{equation}

Plugging \eqref{eq_11223344556} into \eqref{eq_123212321} yields
\begin{eqnarray*}
& &\E \left( \int_{[0,1]\times \R_+} f \big( (1-t_j, x_j)_{j\leq n+1}  \big) \Z_{n+1}(\dd t, \dd x) \right) \\
&=& \E \left( f \big( (A_i, S_i)_{i\leq n+1} \big) \e^{ S_{n+1}} \textstyle \prod\limits_{i=1}^{n} A_i  \right),
\end{eqnarray*}
which concludes the proof.
\end{proof}

We borrow the terminology from the branching random walk literature and call $(A_n, S_n)$ the \emph{spine} of $\S_1$ in generation $n$. The many-to-one formula is a powerful tool in the study of branching random walks. Classically, it is used to calculate quantities depending on the first moment of $\Z_n$. For example, one can recover the first formula in Proposition~\ref{P1} with $t=1$ by choosing $f(\bar t, \bar x)= \ind{x_n \leq x}$ in \eqref{eq:many-to-one} and summing over all possible $n \in \N$. 

We shall now rely on Lemma~\ref{many-to-one} to derive asymptotic results about the minimal particle in $\Z_n$ restricted to the strip $[0,1] \times \R_+$, namely,
\begin{equation}\label{eq_minimum_BRW_Strip}
\mathfrak{z}_n:= \sup\{ z; \Z_n \big( [0,1]\times [0,z] \big) = 0  \}. 
\end{equation}
By scaling, $\mathfrak{z}_n$ determines the law of the minimal position in $\Z_n$ restricted to any strip $[0,a] \times \R_+$ for all $a >0$. 

For a classical branching random walk, the asymptotic behavior of the minimal position $\mathfrak{m}_n$ is now well-known. Addario-Berry and Reed \cite{ABR09} and Hu and Shi \cite{HuS09} independently proved that $\mathfrak{m}_n = n v - c \log n + O_\P(1)$ as $n \to \infty$, where $v \in \R$ and $c>0$ are explicit constants, and $O_\P(1)$ represents a tight sequence of random variables. This result was improved by Aïdékon \cite{Aid13}, who proved that
\[
 \lim_{n \to \infty} \mathfrak{m}_n - nv + c \log n = G \quad \text{ in law,}
\]
where $G$ is a mixture of Gumbel random variables.

In our framework, the behavior of $\mathfrak{z}_n$ is rather different: 
\begin{proposition}\label{prop:asympt_position_min}
It holds that
\begin{equation}
 \lim_{n \to \infty} \frac{\mathfrak{z}_n}{n^{(\alpha + 1)/\alpha}} = c_\alpha \quad \text{ in probability,}
\end{equation}
with $c_\alpha := \alpha \e^{-(\alpha+1)/\alpha}$
\end{proposition}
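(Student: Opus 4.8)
The target is the asymptotics of $\mathfrak{z}_n$, the left-most atom of $\Z_n$ in the strip $[0,1]\times\R_+$, projected on the space axis. I would prove the two matching bounds separately, both built on the many-to-one formula of Proposition~\ref{many-to-one}, which translates moment computations on $\Z_n$ into expectations involving the spine $(A_i,S_i)_{i\le n}$, where $S_n$ is a sum of $n$ i.i.d.\ gamma$(\alpha,1)$ variables and $A_n=U_1\cdots U_n$ is a product of $n$ i.i.d.\ uniforms. The governing quantity is the first-moment function $\E(\Z_n([0,1]\times[0,z]))=z^{\alpha n}/\bigl(n!\,\Gamma(\alpha n+1)\bigr)$, already recorded in the remark after Proposition~\ref{many-to-one}. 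The value $c_\alpha=\alpha\,\e^{-(\alpha+1)/\alpha}$ should emerge as the point where this expected count transitions from $0$ to $\infty$; indeed, writing $z=c\,n^{(\alpha+1)/\alpha}$ and applying Stirling to both $n!$ and $\Gamma(\alpha n+1)$ gives $\E(\Z_n([0,1]\times[0,z]))=\exp\bigl(\alpha n\log c+\tfrac{(\alpha+1)^2}{\alpha}n\log n-(\alpha+1)n\log n+(\alpha+1)n+o(n)\bigr)$, and the exponential rate in $n$ vanishes precisely when $\log c=-(\alpha+1)/\alpha+\log\alpha$, i.e.\ $c=c_\alpha$.

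\textbf{Upper bound (existence of atoms below $(c_\alpha+\epsilon)n^{(\alpha+1)/\alpha}$).} Fix $\epsilon>0$ and set $z=(c_\alpha+\epsilon)n^{(\alpha+1)/\alpha}$. By the first-moment computation just sketched, $\E(\Z_n([0,1]\times[0,z]))\to\infty$ exponentially fast in $n$, so a first moment alone does not give $\P(\mathfrak{z}_n>z)\to0$; I would instead use a second-moment (Paley--Zygmund) argument. The dominant contribution to $\Z_n([0,1]\times[0,z])$ comes from spine trajectories with $S_n$ near its lower deviation value and with the birth-time constraint $A_n\le 1$ automatically satisfied, so I would estimate $\E\bigl(\Z_n([0,1]\times[0,z])^2\bigr)$ via the standard branching-random-walk decomposition over the most recent common ancestor, controlling the sum by the self-similar structure of the reproduction measure $\Lambda^*$. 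The comparison $\E(\Z_n^{(2)})\lesssim \bigl(\E\Z_n\bigr)^2$ up to subexponential corrections then yields $\P(\Z_n([0,1]\times[0,z])\ge1)\ge c>0$; a standard bootstrap using the branching property (splitting into independent subtrees after a few generations) upgrades this to $\P(\mathfrak{z}_n\le z)\to1$.

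\textbf{Lower bound (no atom below $(c_\alpha-\epsilon)n^{(\alpha+1)/\alpha}$).} Set $z=(c_\alpha-\epsilon)n^{(\alpha+1)/\alpha}$. Here the first-moment method is both available and sharp: by Markov's inequality,
\[
\P\bigl(\mathfrak{z}_n\le z\bigr)=\P\bigl(\Z_n([0,1]\times[0,z])\ge1\bigr)\le \E\bigl(\Z_n([0,1]\times[0,z])\bigr)=\frac{z^{\alpha n}}{n!\,\Gamma(\alpha n+1)},
\]
and the Stirling estimate above shows the right-hand side decays to $0$ exponentially fast whenever $c=c_\alpha-\epsilon<c_\alpha$. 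This gives $\P(\mathfrak{z}_n<(c_\alpha-\epsilon)n^{(\alpha+1)/\alpha})\to0$ immediately. Combining the two bounds yields convergence in probability to $c_\alpha$.

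\textbf{Main obstacle.} The delicate part is the upper bound: controlling the second moment of $\Z_n([0,1]\times[0,z])$ uniformly enough to run Paley--Zygmund. One must show that the pair of spines contributing to the second moment typically branch early, so that the correlation correction is subexponential and does not swamp the squared first moment; this requires quantifying, via the many-to-one formula and the self-similarity of $\Lambda^*$, how the birth-time budget (the product $A_n\le1$, equivalently $\sum_i\log(1/U_i)\ge$ something) interacts with the spatial lower-deviation event $\{S_n\le z\}$. The joint large-deviation rate for $(S_n,A_n)$ around the optimal trajectory is what ultimately pins down $c_\alpha$, and verifying that the optimal spatial and temporal profiles are compatible — in particular that the constraint $A_n\le1$ is not binding at the optimum — is the technical heart of the argument.
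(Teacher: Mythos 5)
Your lower bound (no atoms below $(c_\alpha-\epsilon)n^{(\alpha+1)/\alpha}$) is correct and is exactly the paper's argument: the explicit first moment $z^{\alpha n}/\bigl(n!\,\Gamma(\alpha n+1)\bigr)$ plus Markov's inequality, with Stirling pinning down $c_\alpha$ as the point where the exponential rate changes sign. (Your displayed expansion has a slip --- the $n\log n$ coefficient of $z^{\alpha n}$ is $(\alpha+1)$, not $(\alpha+1)^2/\alpha$, and the $-\alpha n\log\alpha$ term from $\Gamma(\alpha n+1)$ is missing --- but your final identification of $c_\alpha$ is right. The paper additionally sums the bound over $n$ and invokes Borel--Cantelli to get an almost sure $\liminf$, which you do not need for convergence in probability.)

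The other direction is where your proposal has a genuine gap. You propose a Paley--Zygmund argument, but the key estimate $\E\bigl(\Z_n([0,1]\times[0,z])^2\bigr)\lesssim\bigl(\E\Z_n([0,1]\times[0,z])\bigr)^2$ is asserted, not established, and you yourself flag it as the technical heart. It is far from routine here: the target level $z\sim n^{(\alpha+1)/\alpha}$ grows superlinearly in $n$, so the effective tilt parameter in the many-to-one formula tends to $0$ with $n$ (this is precisely the degenerate boundary $\theta\to 0+$ that forces the paper's Theorem~\ref{T2} to be a weak rather than strong limit, cf.\ Remark~\ref{r:rcl}), and under the size-biased measure the contributing particles concentrate at the very edge $S_n\approx z$. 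Whether the untruncated second moment is controlled by the squared first moment in this regime, and whether the decomposition over the most recent common ancestor closes without a barrier/truncation, requires a real computation that you have not supplied. The paper avoids all of this with a short detour you did not consider: by the scaling property, $\Z_n\bigl([0,1]\times[0,(c_\alpha+\eta)n^{(\alpha+1)/\alpha}]\bigr)$ has the same law as $\Z_n\bigl([0,n]\times[0,(c_\alpha+\eta)n]\bigr)$, which reduces the question to whether the fixed point $(1,c_\alpha+\eta)$ lies in the rescaled convex hull $\mathcal{H}_n$ of $\Z_n/n$; Biggins' shape theorem (Proposition~\ref{prop_shape}) then gives the answer directly, since $\kappa^*(1,c_\alpha+\eta)>0$. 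This is why the paper's conclusion is only in probability (the scaling identity is distributional, not pathwise). If you want to pursue your route, you would essentially be reproving the relevant half of the shape theorem in a degenerating geometry; citing it after the scaling reduction is both shorter and safer.
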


\begin{proof}[Proof of the lower-bound]
By first moment estimates, the many-to-one Proposition~\ref{many-to-one} yields for every $\eta>0$
\begin{align}\label{lowe_bound_minmal_pos}
\P\big(\mathfrak{z}_n \leq (1-\eta) c_\alpha n^{\frac{\alpha+1}{\alpha}} \big) &
= \P\big( \Z_n \big( [0,1] \times [0, (1-\eta) c_\alpha n^{\frac{\alpha+1}{\alpha}}]) \geq 1 \big) \notag
\\
& \leq \frac{ \big[(1-\eta) c_\alpha \big]^{\alpha n} n^{(\alpha+1) n }}{n! \Gamma \big(\alpha n+1 \big)} \notag
\\
&\lesssim \exp\Big( n \alpha \log(1-\eta) \Big), \notag
\end{align}
where the notation $\lesssim $ means that the left-hand side is smaller than the right-hand side up to a constant (not depending on $n$) factor. In particular, this last quantity is summable over $n$, so in view of Borel-Cantelli lemma, we get
\[
\liminf_{n \to \infty} \mathfrak{z}_n n^{-\frac{\alpha+1}{\alpha}} \geq c_\alpha
\quad \text{a.s.}
\]
\end{proof}

The proof of the upper-bound relies on shape-type results for the convex hull of $\Z_n$ due to Biggins \cite{Big78}. First, we use \eqref{e:intens1} to compute the log-Laplace transform $\kappa(a,b)$ of $Z_1$ for $a,b>0$:
\[
 \kappa(a,b) \coloneqq \log \E\left( \int \e^{-(a t + b x)} \Z_1(\dd t, \dd x) \right) 
= -\log (a b^\alpha).
\]
We then compute its Legendre transform $\kappa^*$:
\begin{eqnarray*}
 \kappa^*(p,q) &\coloneqq& \inf_{a,b>0} \big\{ \kappa(a,b) + p a + b q \} \\
 &=& \left\{\begin{matrix}  1 + \alpha + \log p + \alpha \log (q/\alpha) &\text{if }p,q>0,\\
\infty & \text{otherwise.}\end{matrix} \right.
\end{eqnarray*}

 Next we let $\mathcal{J}_n$ be the set of atoms in $\Z_n$ re-scaled by $1/n$, so that
\[
\textstyle \Z_n := \sum_{z \in \mathcal{J}_n} \delta_{n z},
\] 
and denote by $\mathcal{H}_n$ its convex hull. We readily lift from \cite[Section~3]{Big78} the following statement.

\begin{proposition} \label{prop_shape} Let $C_\alpha := \{ (p,q) : \kappa^*(p,q) \geq 0 \} $ and $\mathrm{int} \, C_\alpha$ be its interior. Then we have
\[
\mathrm{int} \, C_\alpha \subset \liminf_{n \to \infty} \mathcal{H}_n \subset \limsup_{n \to \infty} \mathcal{H}_n \subset C_\alpha
\quad \text{a.s.}
\]
\end{proposition}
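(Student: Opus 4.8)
The plan is to identify Proposition~\ref{prop_shape} as a direct instance of the almost-sure shape theorem for multidimensional branching random walks established by Biggins in \cite{Big78}, so that the work reduces to checking that $(\Z_n)_{n\geq 0}$ meets the hypotheses of the results in Section~3 there. Thanks to the explicit intensity \eqref{e:intens1}, these verifications are entirely computational.

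First I would record the Laplace transform of the reproduction law. By \eqref{e:intens1} and the normalization \eqref{eqn:normalization}, for $(a,b)\in(0,\infty)^2$ one has $\E(\int \e^{-(at+bx)}\Z_1(\dd t,\dd x)) = (ab^{\alpha})^{-1}$, whereas the integral diverges as soon as $a\leq 0$ or $b\leq 0$. Hence $\kappa(a,b)=-\log(ab^{\alpha})$ is finite and real-analytic exactly on the open quadrant $(0,\infty)^2$, which is the interior of its effective domain, and its Fenchel conjugate is the concave function $\kappa^*$ computed above. Consequently $C_\alpha=\{\kappa^*\geq 0\}$ is a closed convex subset of the quadrant, namely $\{pq^{\alpha}\geq \alpha^{\alpha}\e^{-(\alpha+1)}\}$, with nonempty interior since $\kappa^*(p,q)\to+\infty$ as $(p,q)\to\infty$.

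Next I would verify the two structural conditions underlying Biggins' analysis. On the one hand, the process is supercritical in the relevant sense: $\mathrm{int}\,C_\alpha$ is nonempty and exhibits directions of strictly positive growth exponent. On the other hand, and this is what places us in the regular case of the shape theorem, for every $(p,q)$ in the open quadrant the infimum defining $\kappa^*(p,q)$ is attained at the interior point $(a,b)=(1/p,\alpha/q)\in(0,\infty)^2$ of the domain of $\kappa$; thus every accessible direction admits a dual (tilting) parameter sitting in the interior of the region where all exponential moments are finite. These are precisely the ingredients on which Section~3 of \cite{Big78} rests.

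Finally I would invoke the shape theorem. The outer inclusion $\limsup_n \mathcal{H}_n\subset C_\alpha$ is its first-moment half: since $C_\alpha$ is closed and convex, it suffices to control, for each direction, the expected number of rescaled atoms of $\Z_n$ lying beyond the corresponding supporting hyperplane, which by Markov's inequality and the explicit rate function $\kappa^*$ decays geometrically---precisely the estimate already used for the lower bound on $\mathfrak{z}_n$ in Proposition~\ref{prop:asympt_position_min}---so that, after covering the sphere of directions by finitely many hyperplanes, Borel--Cantelli gives the almost-sure statement. The inner inclusion $\mathrm{int}\,C_\alpha\subset\liminf_n \mathcal{H}_n$ is the substantial half, and I expect it to be the main obstacle: one must show that atoms genuinely populate every direction strictly inside the shape, which rests on the almost-sure non-degeneracy of the associated additive martingales, uniformly over the tilting parameters conjugate to a neighbourhood of directions. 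This is the heart of Biggins' argument in Section~3, and by the verifications above it transfers verbatim to $(\Z_n)_{n\geq 0}$, yielding the full chain of inclusions.
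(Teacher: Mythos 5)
Your proposal takes essentially the same route as the paper, whose entire proof is the single sentence that the statement is ``readily lifted'' from Section~3 of Biggins \cite{Big78}; identifying the proposition as an instance of that shape theorem and checking its hypotheses is exactly what is intended. Your verifications are correct (the Laplace transform $\kappa(a,b)=-\log(ab^{\alpha})$ finite precisely on the open quadrant, the Legendre infimum attained at the interior point $(1/p,\alpha/q)$, supercriticality), so if anything your write-up makes explicit the checks the paper leaves implicit.
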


With Proposition~\ref{prop_shape} in hands, we can complete the proof of Proposition~\ref{prop:asympt_position_min} by establishing the converse upper-bound. We stress that our argument only yields an upper-bound in probability, because we rely on the scaling property of $\Z_n$ to apply Proposition~\ref{prop_shape}.

\begin{proof}[Proof of the upper-bound]
To start with, notice that the set $C_\alpha$ in Proposition~\ref{prop_shape} is given by
\[
C_\alpha = \{ (p,q) : pq^\alpha \geq \alpha^\alpha \e^{-(1+\alpha)} \},
\]
and recall that $c_\alpha = \alpha \e^{-(\alpha+1)/\alpha}$. 

Next, the scaling property of $\Z_n$ yields that $\Z_n [0,1]\times [0,t^{1/\alpha} x]$ and $\Z_n [0,t]\times [0,x]$ have the same distribution. Therefore, fixing $\eta>0$ arbitrarily small and setting
$x = (c_\alpha+\eta) n $ and $t = n$, we get
\[
\P\big( \Z_n( [0,1]\times [0,(c_\alpha+\eta) n^{(\alpha+1)/\alpha} ]) = 0 \big) =
\P\big( \Z_n ([0,n]\times [0,(c_\alpha+\eta) n ] )= 0 \big).
\]
Recalling that $\mathcal{H}_n$ stands for the convex hull obtained from the atoms of $\Z_n$ re-scaled by $1/n$, the above yields
\begin{equation}\label{eq:leftmost_particle_convex_hull}
\P\left( \mathfrak{z}_n n^{-\frac{\alpha+1}{\alpha}} \geq c_\alpha+\eta \right) \leq \P\big( (1, c_\alpha +\eta) \notin \mathcal{H}_n \big)
\end{equation}
Then, observe that $\kappa^\ast(1, c_\alpha+ \eta) > 0 $ and hence $(1, c_\alpha+\eta ) \in \mathrm{int} \, C_\alpha$. 
On the other hand, Proposition~\ref{prop_shape} states that 
$\mathrm{int} \, C_\alpha \subset \liminf \mathcal{H}_n$ a.s. Putting the pieces together, this implies that the right-hand side of \eqref{eq:leftmost_particle_convex_hull} tends to zero as $n\to \infty$, concluding the proof. 
\end{proof}

\paragraph*{Acknowledgments:} 
The work of J.B. and A.C. is supported by the Swiss National Science Foundation 200021\underline{{ }{ }}163170. B.M. is partially supported by the ANR MALIN project (ANR-16-CE93-0003).

\bibliographystyle{alpha}
\newcommand{\etalchar}[1]{$^{#1}$}

\end{document}